\documentclass[12pt]{amsart}

\usepackage{amssymb}

\setlength{\textheight}{23cm}
\setlength{\textwidth}{16cm}
\setlength{\topmargin}{-0.8cm}
\setlength{\parskip}{0.3\baselineskip}
\hoffset=-1.4cm

\newtheorem{theorem}{Theorem}[section]
\newtheorem{corollary}[theorem]{Corollary}
\newtheorem{lemma}[theorem]{Lemma}
\newtheorem{proposition}[theorem]{Proposition}

\theoremstyle{definition}
\newtheorem{definition}[theorem]{Definition}

\numberwithin{equation}{section}

\newcommand{\dbar}{\overline{\partial}}
\newcommand{\del}{\partial}
\newcommand{\tensor}{\otimes}

\newcommand{\CC}{\mathbb{C}}
\newcommand{\PP}{\mathbb{P}}
\newcommand{\RR}{\mathbb{R}}

\newcommand{\A}{\mathcal{A}}

\DeclareMathOperator{\End}{End}
\DeclareMathOperator{\GL}{GL}
\DeclareMathOperator{\Hom}{Hom}
\DeclareMathOperator{\id}{Id}
\DeclareMathOperator{\rank}{rank}
\DeclareMathOperator{\SU}{SU}
\DeclareMathOperator{\tr}{tr}
\DeclareMathOperator{\U}{U}
\DeclareMathOperator{\vol}{vol}

\renewcommand{\leq}{\leqslant}

\newcommand*{\longhookrightarrow}{\ensuremath{\lhook\joinrel\relbar\joinrel\rightarrow}}

\begin{document}

\baselineskip=15.2pt

\title{The vortex equation on affine manifolds}

\author[I.\ Biswas]{Indranil Biswas}

\address{School of Mathematics, Tata Institute of Fundamental
Research, Homi Bhabha Road, Bombay 400005, India}

\email{indranil@math.tifr.res.in}

\author[J.\ Loftin]{John Loftin}

\address{Department of Mathematics and Computer Science,
Rutgers University at Newark, Newark, NJ 07102, USA}

\email{loftin@rutgers.edu}

\author[M.~Stemmler]{Matthias Stemmler}

\address{Fachbereich Mathematik und Informatik, Philipps--Universit\"at Marburg, Hans--Meer\-wein--Stra{\ss}e, Lahnberge, 35032 Marburg, Germany}

\email{stemmler@mathematik.uni-marburg.de}

\subjclass[2000]{53C07, 57N16}

\keywords{Affine manifold, vortex equation, stability, dimensional reduction}

\date{}

\begin{abstract}
Let $M$ be a compact connected special affine manifold equipped with an
affine Gauduchon
metric. We show that a pair $(E \,, \phi)$, consisting of a flat vector bundle
$E$ over $M$ and a flat nonzero section $\phi$ of $E$, admits a solution to the
vortex
equation if and only if it is polystable. To prove this, we adapt the
dimensional reduction techniques for holomorphic pairs on K\"ahler
manifolds to the situation of flat pairs on affine manifolds.
\end{abstract}

\maketitle

\section{Introduction}

An affine manifold is a smooth real connected manifold equipped with a
flat torsion--free
connection $D$ on its tangent bundle. Equivalently, an affine structure on an
$n$--dimensional real $C^\infty$
manifold $M$ is provided by an atlas of $M$ such that all the
transition functions are affine maps of the form
\[
x \,\longmapsto\, Ax + b \,, \quad \text{where } A \,\in\, \GL(n \,, \RR) \quad \text{and} \quad b \,\in\, \RR^n \, .
\]
Given an affine manifold $M$, the total space of its tangent bundle $TM$ is
canonically endowed with a complex structure, and the zero section of
$TM
\longrightarrow M$ makes $M$ a totally real submanifold of $TM$. In \cite{Lo09},
a
dictionary was established between the locally constant sheaves on $M$ and the
holomorphic sheaves on $TM$ which are invariant in the fiber directions. In
particular, a flat complex vector bundle over $M$ naturally extends to a
holomorphic vector bundle over $TM$.

An affine manifold is called {\em special\/} if it admits a volume form which is
covariant constant with respect to the flat connection $D$ on $TM$. In
\cite{Lo09}, a Donaldson--Uhlenbeck--Yau type correspondence was established for
flat vector bundles over a compact special affine manifold equipped with an
affine Gauduchon metric. This correspondence states that such a vector bundle
admits an
affine Hermitian--Einstein metric if and only if it is polystable. The proof
of it is an adaptation to the affine situation of the methods of Uhlenbeck and
Yau, \cite{UY86}, \cite{UY89}, for compact K\"ahler manifolds and their
modification by Li and Yau, \cite{LY87}, for the complex Gauduchon case.

A holomorphic pair on a compact K\"ahler manifold $X$ is a pair $(E \,, \phi)$
consisting of a holomorphic vector bundle $E$ over $X$ and a holomorphic section
$\phi$ of $E$ which is not identically equal to zero. These objects were
introduced by Bradlow in \cite{Br90} and \cite{Br91} (see also \cite{GP93},
\cite{GP94a}). Since then they have appeared in various contexts and
turned out to be very useful. For instance,
\begin{itemize}
\item pairs play a very central role in the Donaldson--Thomas theory, and

\item in symplectic topology, pairs yield natural generalizations of
pseudo--holomorphic maps to the equivariant setting \cite{CGMS}.
\end{itemize}

Bradlow defined the notion of $\tau$--stability, where $\tau$ is a
real number, and established a Donaldson--Uhlenbeck--Yau type correspondence for
holomorphic pairs. This correspondence relates $\tau$--stability to the
existence of a Hermitian metric solving the $\tau$--vortex equation, which is
similar to the Hermitian--Einstein equation but additionally involves the
section $\phi$. In \cite{GP94b}, Garc\'ia-Prada showed that the vortex equation
is a dimensional reduction of the Hermitian--Einstein equation for an
$\SU(2)$--equivariant holomorphic vector bundle over $X \times \PP^1_{\mathbb
C}$, where $\SU(2)$ acts trivially on $X$ and in the standard way on
$\PP^1_{\mathbb C}$.

Let $M$ be a compact special affine manifold equipped with an affine Gauduchon
metric. We will call a pair of the form $(E \, ,
\phi)$, where $E$ is a flat vector bundle over $M$ and $\phi$
is a flat nonzero section of $E$, as a flat pair.
Our aim here is to introduce the vortex equation for a flat pair $(E \,,
\phi)$, and to show that $(E \,, \phi)$ admits a solution of the
vortex equation if and only if it is polystable. For this, we first adapt the
theory of Hermitian--Einstein metrics on a flat vector bundle over the affine manifold $M$
to a
smooth complex vector bundle over the product manifold $M \times \PP^1_{\mathbb
C}$ equipped with a certain flat partial connection. Such a vector bundle
canonically
extends to a holomorphic vector bundle over the complex manifold $TM \times
\PP^1_{\mathbb C}$. Then we show that the vortex equation on $M$ is a dimensional
reduction of the Hermitian--Einstein equation on $M \times \PP^1_{\mathbb
C}$.

We obtain the following theorem (see Theorem \ref{main} and Corollary \ref{corollary}):
\begin{theorem}
Let $(M \,, D \,, \nu)$ be an $n$--dimensional compact connected special affine manifold
equipped with an affine Gauduchon metric with associated $(1 \,, 1)$--form
$\omega_M$, and let $(E \,, \phi)$ be a flat pair on $M$. Let $\tau$ be a real number, and let
\[
\widehat \tau \,=\, \frac{\tau}{2} \int_M \frac{\omega_M^n}{\nu} \, .
\]
Then $E$ admits a smooth Hermitian metric satisfying the $\tau$--vortex equation if and only if it is $\widehat \tau$--polystable.
\end{theorem}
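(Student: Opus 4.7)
The strategy follows the Garc\'ia-Prada dimensional reduction adapted to the affine setting, as announced in the introduction. Given a flat pair $(E \,, \phi)$ of rank $r$ on $M$, the first step is to construct on $M \times \PP^1_{\CC}$ a smooth complex vector bundle $\mathcal{F}$ of rank $r+1$, fitting in an extension
\[
0 \,\longrightarrow\, p_1^* E \,\longrightarrow\, \mathcal{F} \,\longrightarrow\, p_2^* \mathcal{O}_{\PP^1}(2) \,\longrightarrow\, 0 \, ,
\]
whose class corresponds to $\phi$ under the K\"unneth isomorphism $H^1(M \times \PP^1_{\CC} \,, p_1^* E \otimes p_2^* \mathcal{O}_{\PP^1}(-2)) \cong H^0(M \,, E)$. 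The flat structure on $E$ and the holomorphic structure on $\mathcal{O}_{\PP^1}(2)$ equip $\mathcal{F}$ with a partial connection that is flat along $M$ and holomorphic along $\PP^1_{\CC}$, and the extension is naturally $\SU(2)$-equivariant for the trivial action on $M$ and the standard action on $\PP^1_{\CC}$. Under the dictionary of \cite{Lo09}, $\mathcal{F}$ then extends to a holomorphic $\SU(2)$-equivariant vector bundle on the complex manifold $TM \times \PP^1_{\CC}$.

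Next I would set up two parallel dictionaries for $\mathcal{F}$ with respect to the product Gauduchon metric $\omega_M + \varepsilon \, \omega_{\PP^1}$, where $\varepsilon > 0$ is a parameter tied to $\tau$ by a relation to be pinned down. On the analytic side, an $\SU(2)$-invariant Hermitian metric on $\mathcal{F}$ is equivalent to the datum of a Hermitian metric $h$ on $E$ together with the standard Fubini--Study metric on $\mathcal{O}_{\PP^1}(2)$; writing out the affine Hermitian--Einstein equation for this invariant metric and averaging under $\SU(2)$ should yield precisely the $\tau$--vortex equation for $(E \,, \phi \,, h)$, with the constant $\widehat\tau = \tfrac{\tau}{2}\int_M \omega_M^n/\nu$ emerging from the interplay between $\omega_M^n/\nu$ and the normalized Fubini--Study volume. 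On the algebraic side, $\SU(2)$-equivariant flat subsheaves of $\mathcal{F}$ are classified by flat subsheaves $F \subset E$, split into two cases according to whether $\phi$ factors through $F$; a slope computation with respect to $\omega_M + \varepsilon \, \omega_{\PP^1}$ then reproduces Bradlow's $\widehat\tau$--slope criterion for the pair, so that $(E \,, \phi)$ is $\widehat\tau$--(poly)stable if and only if $\mathcal{F}$ is equivariantly (poly)stable.

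With both dictionaries in place, the theorem follows by applying the affine Donaldson--Uhlenbeck--Yau correspondence of \cite{Lo09} to $\mathcal{F}$ on $M \times \PP^1_{\CC}$ and then descending the metric and the stability condition back to $M$ via $\SU(2)$-invariance. I expect the main obstacle to be the careful verification that the framework of \cite{Lo09} genuinely applies in this mixed situation: one must check that $M \times \PP^1_{\CC}$ equipped with $\omega_M + \varepsilon \, \omega_{\PP^1}$ satisfies the specialness and Gauduchon hypotheses required there, that its proof interacts well with the $\SU(2)$-action so that a polystable $\mathcal{F}$ admits an \emph{equivariant} Hermitian--Einstein metric (rather than merely some metric that must subsequently be averaged), and that the flat partial connection in the $M$-direction combines correctly with the complex structure along $\PP^1_{\CC}$, both for the correspondence between pairs and equivariant bundles and for the transfer of stability. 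Pinning down the precise relation between $\varepsilon$ and $\tau$ so that the dimensional reduction produces the constant $\widehat\tau$ with the stated normalization will be another point where care is required.
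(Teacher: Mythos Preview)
Your strategy is exactly the one the paper follows, and your anticipation of the obstacles is on target. One point deserves sharpening, however: you speak of ``applying the affine Donaldson--Uhlenbeck--Yau correspondence of \cite{Lo09} to $\mathcal{F}$ on $M \times \PP^1_{\CC}$'' and of checking that $M \times \PP^1_{\CC}$ ``satisfies the specialness and Gauduchon hypotheses required there.'' But $M \times \PP^1_{\CC}$ is not an affine manifold at all---$\PP^1_{\CC}$ admits no flat affine structure (for instance, its Euler characteristic is nonzero)---so there are no hypotheses of \cite{Lo09} to check. What the paper actually does is set up, from scratch, a framework of $S^{0,1}$--partially flat vector bundles on $X = M \times \PP^1_{\CC}$ (flat in the $M$--direction, holomorphic in the $\PP^1$--direction), define degree and stability in this mixed setting, and then prove a new Donaldson--Uhlenbeck--Yau theorem for $X$ by the same mechanism as in \cite{Lo09}: the local calculations match those on the complex manifold $M^{\CC} \times \PP^1_{\CC}$, while all integration is carried out on the compact $X$. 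This is precisely the ``mixed'' situation you describe, but it requires a genuine extension of \cite{Lo09} rather than an application of it.

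On your second concern---whether one needs an \emph{equivariant} Hermitian--Einstein metric directly---the paper sidesteps this by proving ordinary (non--equivariant) stability of $\mathcal{F}$, applying the non--equivariant Donaldson--Uhlenbeck--Yau theorem on $X$, and then averaging the resulting metric over $\SU(2)$ using Haar measure; the averaged metric remains Hermitian--Einstein because the equation is $\SU(2)$--invariant.
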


\medskip
\noindent
\textbf{Acknowledgement:}\, We are very grateful to the referee for comments
to improve the exposition. The first author wishes to thank ICMAT, Madrid,
for hospitality while a part of the work was carried out. The second author
gratefully acknowledges support from Simons Collaboration Grant for Mathematicians 210124.

\section{Preliminaries}

\subsection{Affine manifolds}

Let $(M \,, D)$ be an affine manifold of dimension $n$, meaning that $D$ is a
flat torsion--free connection on the tangent bundle $TM$ of a real $C^\infty$
manifold $M$ of dimension $n$. Throughout the paper, all manifolds are assumed to
be connected and $C^\infty$.
Given an atlas on $M$ such that all the transition maps are affine transformations, the
corresponding coordinates $\{ x^i \}$ are called {\em local affine
coordinates}. If $\{ x^i \}$ is defined on the open subset $U \subset M$,
then write $y^i$ for the fiber coordinates corresponding to the local
trivialization of the tangent bundle given by $\left\{ \frac{\del}{\del x^i}
\right\}_{i=1}^n$. Then on the open subset $TU \subset TM$, we have the
holomorphic coordinate functions $z^i := x^i + \sqrt{-1} \, y^i$, turning $TM$
into a complex manifold in a natural way. This complex manifold of dimension $n$
will be denoted by $M^\CC$. The zero section of $TM \longrightarrow M$ makes $M$
a totally real submanifold of $M^\CC$.

The vector bundle of $(k \,, l)$--forms on $M$ is defined as
\[
\A^{k,l}(M) \,:=\, \bigwedge\nolimits^k T^\ast M \,\tensor\, \bigwedge\nolimits^l T^\ast M \, ;
\]
these forms are restrictions of $(k \,, l)$--forms on the complex
manifold $M^\CC$. There are differential operators
\begin{align*}
\del  \,:=\, \frac{1}{2} \, (d \tensor \id)\,: \, \A^{k,l}(M)
\,&\longrightarrow\, \A^{k+1,l}(M) \, , \\
\dbar \,:=\, (-1)^k \frac{1}{2} \, (\id \tensor d)\,: \, \A^{k,l}(M)
\,&\longrightarrow\, \A^{k,l+1}(M) \, ,
\end{align*}
which are the restrictions of the corresponding operators on $M^\CC$. Also,
there is a wedge product on the direct sum of $(k \, , l)$--forms on $M$,
which is the restriction of the wedge product on $M^\CC$; see \cite{Lo09}.

The affine manifold $M$ is called {\em special\/} if it admits a volume form (meaning a non--vanishing top--degree form) $\nu$ which is covariant constant with respect to the flat connection $D$ on $TM$.

On a special affine manifold $(M \,, D \,, \nu)$, the volume form $\nu$ induces
homomorphisms
\begin{alignat*}{2}
\A^{n,l}(M) &\,\longrightarrow\, \bigwedge\nolimits^l T^\ast M, \quad & \nu \tensor \chi &\,\longmapsto\, (-1)^{\frac{n(n-1)}{2}} \, \chi \, , \\
\A^{k,n}(M) &\,\longrightarrow\, \bigwedge\nolimits^k T^\ast M, \quad & \chi
\tensor \nu &\,\longmapsto\, (-1)^{\frac{n(n-1)}{2}} \, \chi \, ;
\end{alignat*}
these homomorphisms will be called \textit{division\/} by $\nu$. If
$M$ is compact,
an $(n \,, n)$--form $\chi$ on $M$ can be integrated by considering the integral
\[
\int_M \frac{\chi}{\nu} \, .
\]

A smooth Riemannian metric $g$ on $M$ gives rise to a $(1 \,, 1)$--form expressed in local affine coordinates as
\[
\omega \,=\, \sum_{i,j=1}^n g_{ij} \, dx^i \tensor dx^j \, ;
\]
it is the restriction of the corresponding $(1 \,, 1)$--form on $M^\CC$ given by
the extension of $g$ to $M^\CC$. The metric $g$ is called an {\em affine
Gauduchon metric\/} if
\[
\partial \dbar (\omega^{n-1}) \,=\, 0
\]
(recall that $n$ is the dimension of $M$). By \cite[Theorem 5]{Lo09}, on a
compact connected special affine manifold, every conformal class of Riemannian
metrics
contains an affine Gauduchon metric, which is unique up to a positive scalar.

In the context of affine manifolds, the right analogue of a
holomorphic vector bundle over a complex manifold is a flat complex vector
bundle. To explain this, let $E$ be a smooth complex
vector bundle over an affine manifold $M$. The pullback of $E$ to
$M^\CC$ by the natural projection $M^\CC = TM \longrightarrow M$
will be denoted by $E^\CC$. The transition functions of $E^\CC$ are
obtained by extending the transition functions of $E$ in a constant
way along the fibers of $TM$. Such a transition function on $M^\CC$
is holomorphic if and only if the corresponding transition function for
$E$ is locally constant. Consequently,
$E^\CC$ is a holomorphic vector bundle over $M^\CC$ if and only if
$E$ is a flat vector bundle over $M$. Therefore, the map $E \longmapsto E^\CC$ gives
a bijective correspondence between flat vector bundles on $M$ and
holomorphic vector bundles on $M^\CC$ that are constant along the fibers of $TM$.
Since $E^\CC$ is the pullback of a vector bundle on $M$, ``constant along
the fibers of $TM$'' is well-defined.

Let $(E \,, \nabla)$ be a flat complex vector bundle over $M$, meaning $E$ is a
smooth complex vector bundle and $\nabla$ is a flat connection on $E$.
A Hermitian metric $h$ on $E$ defines a Hermitian metric on $E^\CC$. Let $d^h$
be the Chern connection associated to this Hermitian metric on the
holomorphic vector bundle $E^\CC$. Then $d^h$ corresponds to a pair
\[
(\del^h \,, \dbar) \,=\, (\del^{h,\nabla} \,, \dbar^{\nabla}) \, ,
\]
where
\[
\del^{h,\nabla}: \, E \,\longrightarrow\, \A^{1,0}(E) \quad \text{and} \quad
\dbar^{\nabla}:  \, E \,\longrightarrow\, \A^{0,1}(E)
\]
are smooth differential operators. Here we write $\A^{k,l}(E) := \A^{k,l}(M) \tensor E$. This pair $(\del^h \,, \dbar)$ is called the {\em extended Hermitian connection\/} of $(E \,, h)$. Similarly, there are locally defined {\em extended connection forms}
\[
\theta \,\in\, C^\infty(\A^{1,0}(\End E)) \, ,
\]
an {\em extended curvature form}
\[
  R \,=\, \dbar \theta \,\in\, C^\infty(M \,, \A^{1,1}(\End E)) \, ,
\]
an {\em extended mean curvature}
\[
  K \,=\, \tr_g R \,\in\, C^\infty(M \,, \End E) \, ,
\]
and an {\em extended first Chern form}
\[
  c_1(E \,, h) \,=\, \tr R \,\in\, C^\infty(M \,, \A^{1,1}(M)) \, ,
\]
which are the restrictions of the corresponding objects on $E^\CC$. Here $\tr_g$
denotes contraction of differential forms using the Riemannian metric $g$, and
$\tr$ denotes the trace homomorphism on the fibers of $\End E$.

The extended first Chern form is given by
\[
  c_1(E \,, h) \,=\, - \del \dbar (\log \det (h_{\alpha \bar \beta})) \, ,
\]
where $h_{\alpha \bar \beta} = h(s_\alpha, s_\beta)$ in a locally constant frame $\{ s_\alpha \}$ of $E$.

The extended first Chern form and the extended mean curvature are related by
\[
  (\tr K) \, \omega^n \,=\, n \, c_1(E \,, h) \wedge \omega^{n-1} \, .
\]

\begin{definition}
A Hermitian metric $h$ on $E$ is called a {\em Hermitian--Einstein metric\/}
(with respect to $g$) if its extended mean curvature $K_h$ is of the form
\[
  K_h \,=\, \gamma \cdot \id_E
\]
for some real constant $\gamma$.
\end{definition}

The {\em degree\/} of $(E \,, \nabla)$ with respect to a Gauduchon metric $g$ on $M$ is defined to be
\[
  \deg_g(E) \,:=\, \int_M \frac{c_1(E \,, h) \wedge \omega^{n-1}}{\nu} \, ;
\]
it is well--defined by \cite[p.\ 109]{Lo09}.

As usual, if $\rank(E) > 0$, the {\em slope\/} of $E$ with respect to $g$ is
defined to be
\[
  \mu_g(E) \,:=\, \frac{\deg_g(E)}{\rank(E)} \, .
\]

\begin{definition} \mbox{}
\begin{enumerate}
\item[(i)] $(E \,, \nabla)$ is called {\em stable\/} (with respect to $g$) if for every proper nonzero flat subbundle $E'$ of $E$ we have
\[
  \mu_g(E') \,<\, \mu_g(E) \, .
\]
\item[(ii)] $(E \,, \nabla)$ is called {\em polystable\/} (with respect to $g$) if
\[
  (E \,, \nabla) \,=\, \bigoplus_{i=1}^N \, (E^i \,, \nabla^i)\, ,
\]
where each pair $(E^i \,, \nabla^i)$ is a stable flat vector bundle with
slope $\mu_g(E^i) = \mu_g(E)$.
\end{enumerate}
\end{definition}

In \cite{Lo09}, the following Donaldson--Uhlenbeck--Yau type correspondence was
established.

\begin{theorem}[{\cite[Theorem 1]{Lo09}}] \label{loftin-thm}
Let $(M \,, D \,, \nu)$ be a compact special affine manifold equipped with an
affine Gauduchon metric $g$, and let $(E \,, \nabla)$ be flat complex vector
bundle over $M$. Then $E$ admits a Hermitian--Einstein metric with respect to
$g$ if and only if it is polystable.
\end{theorem}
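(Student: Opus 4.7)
The plan is to prove the Donaldson--Uhlenbeck--Yau type correspondence of Theorem~\ref{loftin-thm} by adapting to the affine setting the methods of Uhlenbeck--Yau for compact K\"ahler manifolds and their extension by Li--Yau to the Gauduchon case. Since the complex manifold $M^\CC = TM$ is generally non-compact, one cannot simply invoke a complex-geometric Hermitian--Einstein theorem for $E^\CC$; the analysis must be carried out directly on the compact real manifold $M$. The essential point is that the extended operators $\del$, $\dbar$, and the second-order operator built from $\tr_g \del\dbar$ are elliptic on $M$, so the familiar tools --- maximum principle, elliptic regularity on a compact manifold, Sobolev embeddings --- apply exactly as in the K\"ahler case.

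For the easy direction, suppose $h$ satisfies $K_h = \gamma \cdot \id_E$. Tracing and integrating the identity $(\tr K_h)\,\omega^n = n\, c_1(E,h) \wedge \omega^{n-1}$ with respect to $\nu$ determines $\gamma$ in terms of $\mu_g(E)$, $n$, and $\int_M \omega^n/\nu$. Given a proper nonzero flat subbundle $E' \subset E$, let $\pi_h$ denote the $h$-orthogonal projection onto $E'$ and let $\beta$ be the associated second fundamental form. A Chern--Weil computation on the holomorphic extension $E^\CC$, restricted back to $M$, compares $c_1(E', h|_{E'})$ with $c_1(E,h)|_{E'}$ via a nonnegative term built from $|\beta|^2$. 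Wedging with $\omega^{n-1}$, dividing by $\nu$, and integrating --- with integration by parts legitimized by the affine Gauduchon condition $\del\dbar(\omega^{n-1}) = 0$ --- yields $\mu_g(E') \leq \mu_g(E)$, with equality forcing $\beta = 0$ and hence an orthogonal splitting $E = E' \oplus (E')^\perp$ preserved by $\nabla$. A standard induction on $\rank(E)$ then produces the polystable decomposition.

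For the hard direction, I would use the continuity method. Fix a background Hermitian metric $h_0$ and let $\gamma$ be the Einstein constant determined above. For $\varepsilon \in (0, 1]$, consider the family of equations
\[
K_{h_\varepsilon} - \gamma \cdot \id_E + \varepsilon \log(h_0^{-1} h_\varepsilon) \,=\, 0
\]
in the unknown Hermitian metric $h_\varepsilon$. At $\varepsilon = 1$, the zeroth-order perturbation makes the nonlinear operator strictly monotone, so a unique smooth solution exists. Openness of the solvability set in $(0,1]$ follows from the implicit function theorem, since the linearization is a strictly elliptic self-adjoint second-order operator on the compact manifold $M$. Closedness as $\varepsilon \to 0^+$ reduces to obtaining uniform $C^\infty$ bounds on $\{ h_\varepsilon \}$; the decisive step is an $L^\infty$ bound on $\log(h_0^{-1} h_\varepsilon)$, after which higher-order bounds follow by linear elliptic bootstrapping and Arzel\`a--Ascoli yields a smooth limit $h$ with $K_h = \gamma \cdot \id_E$.

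The main obstacle, and the heart of the proof, is the dichotomy when the $L^\infty$ estimate fails. Following Uhlenbeck--Yau, one normalizes $s_\varepsilon := \log(h_0^{-1} h_\varepsilon)$ by its sup norm and extracts a weak $L^2_1$ limit whose spectral projections yield a nontrivial self-adjoint $L^2_1$ projection $\pi$ on $E$ satisfying $(\id_E - \pi) \circ \dbar^\nabla \pi = 0$ weakly, together with a slope inequality violating stability. One therefore needs an affine analog of the Uhlenbeck--Yau regularity theorem, asserting that such a weakly $\dbar^\nabla$-flat projection is represented by an honest flat subbundle of $E$. This is the main technical step; it reduces to elliptic regularity for $\dbar^\nabla$ on the compact manifold $M$ and, working in local affine charts where the extended operators agree with their complex counterparts on $M^\CC$, should transfer from the classical argument with only cosmetic changes. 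Once available, the destabilizing flat subbundle contradicts polystability (after first reducing to the stable case), completing the proof.
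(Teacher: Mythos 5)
Your proposal follows essentially the same route as the paper, which itself only outlines the argument and defers the details to \cite{Lo09}: adapt the Uhlenbeck--Yau continuity method and its Li--Yau Gauduchon modification, keeping all local calculations identical to the complex case via the dictionary with $M^\CC$ and using $D\nu = 0$ to justify integration by parts on the compact manifold $M$. Your additional detail (the perturbed equation, the $L^\infty$ dichotomy, and the affine analog of the weakly holomorphic subbundle regularity step, which here produces a genuine flat subbundle) matches the structure of the proof in \cite{Lo09}, so the proposal is correct and consistent with the paper's approach.
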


Since we rely on these techniques below, we summarize below the main ideas
of the proof.

\begin{proof}[Outline of proof]
The proof is an adaptation to the affine situation of the techniques of
Uhlenbeck--Yau for holomorphic vector bundles over compact K\"ahler manifolds,
\cite{UY86}, and their extension by Li--Yau to vector bundles over compact complex
Gauduchon manifolds \cite{LY87}.  In particular, we have set things up so that
all the relevant quantities on $(M\, ,E)$ such as the metric $g$, the extended
Hermitian connection $(\del^h\, ,\dbar)$, etc., are restrictions of the same
quantities on the holomorphic vector bundle $(M^\CC\, ,E^\CC)$, with the
quantities on $M^\CC$ being constant along the fibers of $M^\CC\,\longrightarrow
\, M$. The idea of the proof is to think of all the calculations as happening upstairs on the noncompact $M^\CC$ while still managing to integrate over the compact manifold $M$.

Here are a few more details. The proof in the complex case relies on most of 
the standard tools of the elliptic theory on compact manifolds: integration 
by parts, the maximum principle, $L^p$ estimates, Sobolev embedding, 
spectral theory of elliptic operators, and some intricate local 
calculations.  Our setup forces the local calculations to be \emph{exactly} 
the same as in the complex case.  The maximum principle, $L^p$ estimates, 
Sobolev embedding, and spectral theory translate to our case with no 
difficulty.  The main innovation is to handle integration by parts. For this 
we need the definition of integrating an $(n\, ,n)$ form $\chi$ on $M$ via 
$\int_M \frac{\chi}{\nu}$ as above. The fact that $D\nu\,=\,0$ ensures that 
integrating by parts does not produce any extraneous terms, and so the local 
calculations on $M$ remain in exact correspondence with those on $M^\CC$.
\end{proof}

\subsection{Partial connections}

In Section \ref{sec3}, we will adapt some of the notions from the theory of
affine manifolds to products of affine manifolds with the complex projective
line $\PP^1_{\mathbb C}$; these products are non--affine smooth real manifolds.
For this, we need partial connections on the product $M\times \PP^1_{\mathbb C}$, for the affine directions on $M$ and the complex directions on $\PP^1$ must be distinguished. We recall the definition of partial connections.

Let $X$ be a smooth real manifold; its real tangent
bundle will be denoted by $T_\RR X$. Let
\[
  S \,\subset\, T_\CC X \,:=\, T_\RR X \tensor \CC
\]
be a subbundle of positive rank which is {\em integrable}, meaning
\begin{itemize}
\item $S \cap {\overline S}\, \subset\, T_\CC X$ has constant rank, and

\item both $S$ and $S + \overline{S}$ are closed under the Lie bracket
(the first condition implies that $S + \overline{S}$ is a subbundle of
$T_\CC X$).
\end{itemize}
Let
\begin{equation} \label{projection}
  q_S\,: \, T_\CC^\ast X \,:=\, (T_\CC X)^\ast \,\longrightarrow\, S^\ast
\end{equation}
be the dual of the inclusion map of $S$ in $T_\CC X$.

Let $E$ be a smooth complex vector bundle over $X$. A {\em partial connection
on $E$ in the direction of $S$\/} is a smooth differential operator
\[
  \nabla: \, E \,\longrightarrow\, S^\ast \tensor E
\]
satisfying the Leibniz condition, meaning that for a smooth function $f$ on $X$
and a smooth section $s$ of $E$, the identity
\[
  \nabla(f s) \,=\, f \nabla(s) + q_S(df) \tensor s
\]
holds, where $q_S$ is the projection in \eqref{projection}.

Since the distribution $S$ is integrable, smooth sections of the kernel of
$q_S$ (see \eqref{projection}) are closed under the exterior derivation. Therefore, there is an induced exterior derivation
\begin{equation} \label{d-hat}
  \widehat d: \, C^\infty (X,\, S^\ast) \,\longrightarrow\,
C^\infty \left(X,\, \bigwedge\nolimits^2 S^\ast\right)
\end{equation}
on the smooth sections of $S^\ast$.

Let $\nabla$ be a partial connection on $E$ in the direction of $S$. Consider the differential operator
\[
  \nabla_1: \, S^\ast \tensor E \,\longrightarrow\, \left(\bigwedge\nolimits^2 S^\ast\right) \tensor E
\]
defined by
\[
  \nabla_1(\chi \tensor s) \,=\, \widehat d(\chi) \tensor s - \chi \wedge \nabla(s) \, ,
\]
where $\widehat d$ is constructed in \eqref{d-hat}. The composition
\[
  E \,\stackrel{\nabla}{\longrightarrow}\, S^\ast \tensor E \,\stackrel{\nabla_1}{\longrightarrow}\, \left(\bigwedge\nolimits^2 S^\ast\right) \tensor E
\]
is $C^\infty(X)$--linear and thus defines a smooth section
\[
  R(\nabla) \,\in\, C^\infty\left(X \,, \left(\bigwedge\nolimits^2 S^\ast\right) \tensor E \tensor E^\ast\right)
  \,=\, C^\infty\left(X \,, \left(\bigwedge\nolimits^2 S^\ast\right) \tensor \End(E)\right) \, .
\]
This section $R(\nabla)$ is called the {\em curvature\/} of $\nabla$. If
$R(\nabla) = 0$, then the partial connection $\nabla$ is called {\em flat}. By
\cite[Theorem 1]{Ra79}, a partial connection $\nabla$ on $E$ is flat if and only
if $E$ admits locally defined smooth frames $\{ s_\alpha \}$ satisfying
$\nabla(s_\alpha) = 0$.

A pair $(E \,, \nabla)$ consisting of a smooth complex vector bundle $E$ over
$X$ and a flat partial connection $\nabla$ on $E$ in the direction of $S$ will
be called an {\em $S$--partially flat vector bundle}. We also write $E$ for $(E
\,,
\nabla)$ if $\nabla$ is clear from the context.

\section{Hermitian--Einstein metrics over $M \times \PP^1_{\mathbb
C}$}\label{sec3}

In this section, we investigate Hermitian--Einstein metrics on bundles over $M\times \PP^1_{\mathbb C}$.  We will use this set--up below to address the vortex equation by adapting the dimensional reduction technique of Garc\'ia-Prada \cite{GP94b} to this case.

Let $(M \,, D)$ be an affine manifold of dimension $n$. Denote by
$\PP^1 \,=\, \PP^1_{\mathbb C}$ the complex projective line. Consider the
product manifold
\[
  X \,:=\, M \times \PP^1 \, ,
\]
which is a smooth real manifold of dimension $n+2$. Let
\begin{equation}\label{pq}
  p: \, M \times \PP^1 \,\longrightarrow\, M \quad \text{and} \quad
  q: \, M \times \PP^1 \,\longrightarrow\, \PP^1
\end{equation}
be the natural projections.
Recall the idea from the proof of Theorem \ref{loftin-thm} above.  We will find a dictionary between geometric objects on the compact manifold $X$ and geometric objects on $M^\CC \times \PP^1$ which are constant along the fibers of the projection from $M^\CC \times \PP^1 \to X=M\times \PP^1$.  Our goal is to define structures on $X$ so that the local calculations and integration by parts needed to prove the Donaldson--Uhlenbeck--Yau correspondence are formally the same as on the complex manifold $M^\CC \times \PP^1$, but all the integration can be carried out on $X$.

The complexified tangent bundle of $X$ can be decomposed as
\[
  T_\CC X \;=\; p^\ast T_\CC M \oplus q^\ast T_\CC \PP^1
  \;=\; p^\ast T_\CC M \oplus q^\ast T^{1,0} \PP^1 \oplus q^\ast T^{0,1} \PP^1
\, .
\]
Here $T^{1,0} \PP^1$ and $T^{0,1} \PP^1$ are respectively the holomorphic and
anti--holomorphic tangent bundles of $\PP^1$.

The two distributions
\begin{equation}\label{dds}
  S^{1,0} \,:=\, p^\ast T_\CC M \oplus q^\ast T^{1,0} \PP^1 \quad \text{and} \quad
  S^{0,1} \,:=\, p^\ast T_\CC M \oplus q^\ast T^{0,1} \PP^1
\end{equation}
are integrable. A smooth complex vector bundle $E$ over $X$ admits a flat partial connection in the direction of $S^{0,1}$ if and only if it admits local trivializations with transition functions $\varphi$ satisfying
\[
  q_{S^{0,1}}(d \varphi) \,=\, 0 \, ,
\]
where $q_{S^{0,1}}$ is defined as in \eqref{projection} for $S^{0,1}$ in
\eqref{dds}. This means that $\varphi$
is locally constant in the direction of $M$ and holomorphic in the direction of $\PP^1$. Denote by $E^\CC$ the pullback of $E$ to $M^\CC \times \PP^1$ by the natural projection
\[
  M^\CC \times \PP^1 \,=\, TM \times \PP^1 \,\longrightarrow\, M \times \PP^1 \,=\, X \, .
\]
The transition functions for $E^\CC$ are obtained by extending the transition
functions of $E$ in a constant way along the fibers of $TM$. Consequently,
$E^\CC$ is a holomorphic vector bundle if and only if $E$ is an
$S^{0,1}$--partially flat vector bundle. Therefore, the map $E \longmapsto
E^\CC$ gives
a bijective correspondence between $S^{0,1}$--partially flat vector bundles on
$X$ and
holomorphic vector bundles on $M^\CC \times \PP^1$ that are constant along the
fibers
of $TM$.

We define $(k \,, l)$--forms on $X$ to be smooth sections of the vector bundle
\[
  \A^{k,l}(X) \,:=\, \bigwedge\nolimits^k (S^{1,0})^\ast
  \,\tensor\, \bigwedge\nolimits^{l} (S^{0,1})^\ast \, ;
\]
these forms are restrictions of $(k \,, l)$--forms on the complex manifold
$M^\CC \times \PP^1$. Just as in the affine case, there are natural $\del$ and
$\dbar$ operators on these forms which are the restrictions of the corresponding
operators on $M^\CC \times\PP^1$. More precisely, denote by $d^{1,0}$
(respectively, $d^{0,1}$) the differential operator given in \eqref{d-hat} for
the distribution $S^{1,0}$ (respectively, $S^{0,1}$) in \eqref{dds}. The induced
operators
\begin{align*}
  d^{1,0}: \, C^\infty\left(M\times \PP^1,\, \bigwedge\nolimits^k (S^{1,0})^\ast\right)
\,&\longrightarrow\, C^\infty\left(M\times \PP^1,\, \bigwedge\nolimits^{k+1}
(S^{1,0})^\ast\right) \, ,
\\
  d^{0,1}: \, C^\infty\left(M\times \PP^1,\, \bigwedge\nolimits^l (S^{0,1})^\ast\right)
\,&\longrightarrow\, C^\infty\left(M\times \PP^1,\, \bigwedge\nolimits^{l+1}
(S^{0,1})^\ast\right)
\end{align*}
will be denoted by the same symbols. Then the operators $\del$ and $\dbar$ are defined by
\begin{align*}
  \del  \,=\, \frac{1}{2} \, (d^{1,0} \tensor \id): \, \A^{k,l}(X) \,&\longrightarrow\, \A^{k+1,l}(X) \, , \\
  \dbar \,=\, (-1)^k \frac{1}{2} \, (\id \tensor d^{0,1}): \, \A^{k,l}(X) \,&\longrightarrow\, \A^{k,l+1}(X) \, .
\end{align*}

The wedge product for $(k \,, l)$--forms on $X$ is defined in the same way as in \cite{Lo09}; more precisely,
\[
  (\chi_1 \tensor \psi_1) \wedge (\chi_2 \tensor \psi_2) \,:=\, (-1)^{l_1 k_2} \, (\chi_1 \wedge \chi_2) \tensor (\psi_1 \wedge \psi_2)
\]
if $\chi_i \tensor \psi_i$ are forms of type $(k_i \,, l_i)$, $i = 1, 2$.

Now let $(M \,, D \,, \nu)$ be a compact special affine manifold, meaning that
$(M \,, D)$ is an affine manifold equipped with a $D$--covariant constant
volume form $\nu$. Let $g_{\PP^1}$ be the Fubini--Study metric on $\PP^1$ with
K\"ahler form $\omega_{\PP^1}$, normalized so that $$\int_{\PP^1} \omega_{\PP^1}
\,=\, 1\, .$$

A {\em Hermitian metric\/} on $X$ is defined to be a Riemannian metric $g$ on $X$ of the form
\[
  g \,=\, p^\ast g_M \oplus q^\ast g_{\PP^1} \, ,
\]
where $g_M$ is a Riemannian metric on $M$. Such a metric $g$ gives rise to a
$(1 \,, 1)$--form $\Omega_g$ on $X$, which is the restriction of the $(1
\,, 1)$--form on $M^\CC \times \PP^1$ given by the extension of $g$ to $M^\CC
\times \PP^1$; it has the following expression:
\[
  \Omega_g \,=\, p^\ast \omega_M - \sqrt{-1} \, q^\ast \omega_{\PP^1} \, ,
\]
where $\omega_M$ is the $(1 \,, 1)$--form on $M$ corresponding to $g_M$. If
\[
  \del \dbar(\Omega_g^n) \,=\, 0 \, ,
\]
then $g$ is called a {\em Gauduchon metric}. Since $\dim_\RR M = n$ and $\dim_\RR \PP^1 = 2$, we have
\[
  \del \dbar(\Omega_g^n) \,=\, p^\ast(\del \dbar(\omega_M^n)) - \sqrt{-1} \, n p^\ast (\del \dbar(\omega_M^{n-1})) \wedge q^\ast \omega_{\PP^1}
 \,=\, - \sqrt{-1} \, n p^\ast (\del \dbar(\omega_M^{n-1})) \wedge q^\ast
\omega_{\PP^1} \, .
\]
Therefore, $g$ is a Gauduchon metric on $X$ if and only if $g_M$ is an affine
Gauduchon metric on~$M$. (Note this construction depends only on the fact that $g_{\PP^1}$ is K\"ahler.)

Since
\[
\bigwedge\nolimits^{n+1} (S^{0,1})^\ast \,\cong\, p^\ast \left(\bigwedge
\nolimits^n T_\CC^\ast M\right) \tensor q^\ast (T^{0,1} \PP^1)^\ast\, ,
\]
and $p^\ast \nu$ is a non--vanishing section of
$p^\ast\left(\bigwedge\nolimits^n T_\CC^\ast M\right)$, for every $0\,\leq\, k\,
\leq \, n+1$, we have a map
\begin{align*}
  \A^{k,n+1}(X) \,=\, \bigwedge\nolimits^k (S^{1,0})^\ast \tensor \bigwedge\nolimits^{n+1} (S^{0,1})^\ast
  \,&\longrightarrow\, \bigwedge\nolimits^{k+1} T_\CC^\ast X \, , \\
 \chi \tensor (p^\ast \nu \tensor \psi) \,&\longmapsto\, (-1)^{\frac{n(n+1)}{2}}
\, \chi \wedge (\sqrt{-1} \, \psi) \, .
\end{align*}
Here $\chi$ is a smooth section of $\bigwedge\nolimits^k
(S^{1,0})^\ast$, and $\psi$ is a smooth section
of $q^\ast (T^{0,1} \PP^1)^\ast$. On the right--hand side, $\chi$ (respectively, $\psi$) is considered as a $k$--form (respectively, $1$--form) on $X$ via the inclusion
\[
  \bigwedge\nolimits^k (S^{1,0})^\ast \,\longhookrightarrow\, \bigwedge\nolimits^k T_\CC^\ast X \quad
  \left(\text{respectively, } q^\ast (T^{0,1} \PP^1)^\ast \,\longhookrightarrow\, T_\CC^\ast X \right) \, .
\]
Similarly, we have a map
\begin{align*}
  \A^{n+1,l}(X) \,=\, \bigwedge\nolimits^{n+1} (S^{1,0})^\ast \tensor \bigwedge\nolimits^l (S^{0,1})^\ast
  \,&\longrightarrow\, \bigwedge\nolimits^{l+1} T_\CC^\ast X \, , \\
  (p^\ast \nu \tensor \psi) \tensor \chi \,&\longmapsto\, - (-1)^{\frac{n(n+1)}{2}} \, \chi \wedge (\sqrt{-1} \, \psi) \, .
\end{align*}
Both of these maps are called \textit{division\/} by $p^\ast \nu$. The factor
$\sqrt{-1}$
ensures that for every Hermitian metric $g$ on $X$, the form
\[
  \frac{\Omega_g^{n+1}}{p^\ast \nu}
\]
is real and thus it is a volume form; the factor $(-1)^{n(n+1)/2}$ ensures that
the form induces the same orientation on $X$ as the volume form
\[
  p^\ast \nu \wedge (- \sqrt{-1} \, q^\ast \omega_{\PP^1}) \, .
\]
Also, note that for $k=l=n+1$, the two maps coincide. An $(n+1 \,, n+1)$--form
$\chi$ on $X$ can be integrated by considering the integral
\[
  \int_X \frac{\chi}{p^\ast \nu} \, .
\]

As in \cite[Proposition 3]{Lo09}, we have the following proposition, which plays
an important role when integrating by parts on $X$. The proof is identical to that
of Proposition 3 in \cite{Lo09}.

\begin{proposition} \label{int-parts}
For an $(n \,, n+1)$--form $\chi$ on $X$, the identity
\[
 \frac{\del \chi}{p^\ast \nu} \,=\, \frac{1}{2} \, d\left(\frac{\chi}{p^\ast
\nu}\right)
\]
holds, while for an $(n+1 \,, n)$--form $\chi$ on $X$,
\[
  \frac{\dbar \chi}{p^\ast \nu} \,=\, (-1)^{n+1} \frac{1}{2} \, d\left(\frac{\chi}{p^\ast \nu}\right) \, .
\]
\end{proposition}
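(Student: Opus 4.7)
Since both identities are pointwise, the plan is to reduce each to a calculation in adapted local coordinates. Around any point of $X$, choose a product chart $U\times V$, where $U\subset M$ carries local affine coordinates $x^1,\ldots,x^n$ with $\nu = dx^1\wedge\cdots\wedge dx^n$ (which is possible because $D\nu = 0$ forces $\nu$ to have constant coefficients in any affine frame) and $V\subset\PP^1$ carries a holomorphic coordinate $w$. On $U\times V$, the form $p^\ast\nu$ is $d$-closed, and local frames for $(S^{1,0})^\ast$ and $(S^{0,1})^\ast$ are $\{p^\ast dx^i,\, q^\ast dw\}$ and $\{p^\ast dx^i,\, q^\ast d\bar w\}$ respectively.

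For the first identity I would write $\chi\in\A^{n,n+1}(X)$ locally as $\chi = \beta \otimes (p^\ast\nu \otimes q^\ast d\bar w)$ with $\beta$ a section of $\bigwedge\nolimits^n(S^{1,0})^\ast$; this decomposition exists because the top exterior power $\bigwedge\nolimits^{n+1}(S^{0,1})^\ast$ is locally generated by the single element corresponding to $p^\ast\nu \otimes q^\ast d\bar w$. The division formula then gives $\chi/p^\ast\nu = (-1)^{n(n+1)/2}\sqrt{-1}\,\beta \wedge q^\ast d\bar w$. Applying $d$, using the Leibniz rule together with $d(q^\ast d\bar w) = 0$, and observing that any term of $d\beta$ already containing a factor $d\bar w$ dies upon wedging with $q^\ast d\bar w$, one finds
\[
d(\chi/p^\ast\nu) \,=\, (-1)^{n(n+1)/2}\sqrt{-1}\, d^{1,0}\beta \wedge q^\ast d\bar w.
\]
On the other side, $d^{1,0}(p^\ast\nu) = 0$ combined with $\partial = \frac{1}{2}(d^{1,0}\otimes\id)$ yields $\partial\chi = \frac{1}{2}(d^{1,0}\beta)\otimes(p^\ast\nu \otimes q^\ast d\bar w)$, and dividing by $p^\ast\nu$ once more produces exactly half of the expression above.

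The second identity follows from the parallel argument: decompose $\chi\in\A^{n+1,n}(X)$ as $(p^\ast\nu \otimes q^\ast dw) \otimes \beta$ with $\beta$ a section of $\bigwedge\nolimits^n(S^{0,1})^\ast$, carry out the analogous division and exterior-derivative computations, and apply $\dbar = (-1)^{k}\frac{1}{2}(\id \otimes d^{0,1})$ at $k = n+1$. The extra factor $(-1)^{n+1}$ built into $\dbar$ is exactly the sign appearing on the right-hand side of the stated formula. The principal technical obstacle is sign bookkeeping: the $(-1)^{n(n+1)/2}$ from the division map, the $(-1)^{l_1 k_2}$ from the tensor-form wedge product, and the $(-1)^{k}$ from the definition of $\dbar$ interact nontrivially and must be tracked carefully through each step; they combine in exactly the same way as in the affine situation of \cite[Proposition 3]{Lo09}, so no essentially new computation is required beyond the one recorded there.
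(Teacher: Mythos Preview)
Your proposal is correct and follows the same approach the paper has in mind: the paper does not give a proof at all but simply states that ``the proof is identical to that of Proposition~3 in \cite{Lo09}'', and your local-coordinate computation is precisely the adaptation of that argument to the product $M\times\PP^1$. One minor remark: the appeal to ``$d^{1,0}(p^\ast\nu)=0$'' is slightly misplaced, since $p^\ast\nu\otimes q^\ast d\bar w$ sits in the second tensor slot where $\id$ acts; what you actually use is that this is a constant-coefficient frame element in your chosen coordinates, which is exactly the content of $D\nu=0$.
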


Let $(E \,, \nabla)$ be an $S^{0,1}$--partially flat vector bundle on $X$, and
let $h$ be a smooth Hermitian metric on $E$. As mentioned above, $E$ extends to
a holomorphic vector bundle $E^\CC$ over $M^\CC \times \PP^1$. The metric $h$
defines a Hermitian metric on $E^\CC$; let $d^h$ denote the corresponding Chern
connection on $E^\CC$. As in the affine case, $d^h$ corresponds to a pair
\[
  (\del^h \,, \dbar) \,=\, (\del^{h,\nabla} \,, \dbar^{\nabla}) \, ,
\]
where
\[
  \del^{h,\nabla}: \, E \,\longrightarrow\, \A^{1,0}(E) \quad \text{and} \quad
  \dbar^{\nabla}:  \, E \,\longrightarrow\, \A^{0,1}(E)
\]
are smooth differential operators. We write $\A^{k,l}(E) \,:=\,
\A^{k,l}(X) \tensor E$ as before. This pair $(\del^h \,, \dbar)$ is called the
{\em extended Hermitian connection\/} for $(E \,, h)$. Similarly, there are
locally defined {\em extended connection forms}
\[
  \theta \,\in\, C^\infty(\A^{1,0}(\End E)) \, ,
\]
an {\em extended curvature form}
\[
  R \,=\, \dbar \theta \,\in\, C^\infty(X \,, \A^{1,1}(\End E)) \, ,
\]
an {\em extended mean curvature}
\[
  K \,=\, \tr_g R \,\in\, C^\infty(X \,, \End E) \, ,
\]
and an {\em extended first Chern form}
\[
  c_1(E \,, h) \,=\, \tr R \,\in\, C^\infty(X \,, \A^{1,1}(X)) \, ,
\]
which are the restrictions of the corresponding objects on $E^\CC$. Here
$\tr_g$ denotes contraction of differential forms using the Riemannian metric
$g$, and $\tr$ as before denotes the trace homomorphism on the fibers of $\End
E$.

The extended first Chern form is given by
\[
  c_1(E \,, h) \,=\, - \del \dbar (\log \det (h_{\alpha \bar \beta})) \, ,
\]
where $h_{\alpha \bar \beta}\,:=\,h(s_\alpha, s_\beta)$ with respect to a
locally defined smooth frame $\{ s_\alpha \}$ of $E$ satisfying
$\nabla(s_\alpha)\,=\, 0$.

The extended first Chern form and the extended mean curvature are related by
the equation
\[
  (\tr K) \, \Omega_g^{n+1} \,=\, (n+1) \, c_1(E \,, h) \wedge \Omega_g^n \, .
\]

\begin{definition}
A Hermitian metric $h$ on $E$ is called a {\em Hermitian--Einstein metric\/} (with respect to $g$) if its extended mean curvature $K_h$ is of the form
\[
  K_h \,=\, \gamma \cdot \id_E
\]
for some real constant $\gamma$.
\end{definition}

The {\em degree\/} of $(E \,, \nabla)$ with respect to a Gauduchon metric $g$ on $X$ is defined to be
\[
  \deg_g(E) \,:=\, \int_X \frac{c_1(E \,, h) \wedge \Omega_g^n}{p^\ast \nu} \, ;
\]
as in the affine case, it is independent of the choice of Hermitian metric $h$
by Proposition~\ref{int-parts} because $g$ is Gauduchon.

If $\rank(E) > 0$, the {\em slope\/} of $E$ with respect to $g$ is defined to be
\[
  \mu_g(E) \,:=\, \frac{\deg_g(E)}{\rank(E)} \, .
\]

\begin{definition} \mbox{}
\begin{enumerate}
\item[(i)] $(E \,, \nabla)$ is called {\em stable\/} (with respect to $g$) if
for every proper nonzero subbundle $E'$ of $E$ which is preserved by
$\nabla$, meaning $\nabla(E') \subset \A^{0,1}(E')$, the inequality
\[
  \mu_g(E') \,<\, \mu_g(E)
\]
holds.

\item[(ii)] $(E \,, \nabla)$ is called {\em polystable\/} (with respect to $g$) if
\[
  (E \,, \nabla) \,=\, \bigoplus_{i=1}^N \, (E^i \,, \nabla^i)\, ,
\]
where $(E^i \,, \nabla^i)$ are $S^{0,1}$--partially flat stable bundles
with slope $\mu_g(E^i) \,=\, \mu_g(E)$.
\end{enumerate}
\end{definition}

Given this set--up, the following Donaldson--Uhlenbeck--Yau type correspondence
can be proved in the same way as in the affine case (see \cite[Theorem 1]{Lo09}).

\begin{theorem}\label{duy}
Let $(M \,, D \,, \nu)$ be a compact connected special affine manifold equipped
with an affine Gauduchon metric $g_M$, and let $X \,:=\, M \times \PP^1$
together with the Gauduchon metric $g$ be as defined above. Let $(E \,, \nabla)$
be an  $S^{0,1}$--partially flat vector bundle over $X$. Then $E$ admits a
Hermitian--Einstein metric with respect to $g$ if and only if it is polystable.
\end{theorem}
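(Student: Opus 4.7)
The plan is to prove Theorem \ref{duy} in direct parallel with the outline of Theorem \ref{loftin-thm}, exploiting the dictionary of Section \ref{sec3}: an $S^{0,1}$--partially flat bundle $E$ on $X$ corresponds to a holomorphic bundle $E^\CC$ on $M^\CC \times \PP^1$ that is constant along the fibers of $TM \times \PP^1 \longrightarrow X$, and by construction every relevant object---the Gauduchon metric $g$, the extended Chern connection $(\del^h,\dbar)$, the extended curvature $R$, the extended mean curvature $K$, and the extended first Chern form---is the restriction of its analogue on $M^\CC \times \PP^1$. Consequently every local calculation from the Li--Yau \cite{LY87} proof of the Donaldson--Uhlenbeck--Yau correspondence in the complex Gauduchon case transfers verbatim, provided that integration against $\Omega_g^{n+1}$ on the noncompact $M^\CC \times \PP^1$ is replaced by $\int_X \frac{\cdot}{p^\ast \nu}$ on the compact $X$, and that integration by parts is controlled using Proposition \ref{int-parts}. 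The role played by $D\nu=0$ in \cite{Lo09} is now played by the fact that $p^\ast\nu$ is constant along the fibers of $TM \times \PP^1$.

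For the easy direction, if $K_h = \gamma \, \id_E$ and $E' \subset E$ is any proper nonzero $\nabla$--invariant subbundle, I would apply the standard Chern--Weil computation to the orthogonal projection $\pi$ onto $E'$, which expresses $\mu_g(E') - \mu_g(E)$ as minus the $L^2$--norm squared of the second fundamental form of $E' \subset E$, integrated via $\int_X \frac{\cdot}{p^\ast \nu}$. This gives $\mu_g(E') \leq \mu_g(E)$, with equality forcing $E'$ to split off as an orthogonal direct summand; iterating yields polystability.

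For the hard direction, assume $(E,\nabla)$ is polystable. After decomposing $E$ into stable summands, one reduces to the stable case. Following \cite{UY86}, \cite{UY89} and \cite{LY87}, I would fix a background Hermitian metric $h_0$ and, for $\varepsilon > 0$, solve by the continuity method the perturbed equation
\[
K_{h_\varepsilon} - \gamma \, \id_E \,=\, -\varepsilon \, \log(h_0^{-1} h_\varepsilon) \, .
\]
The required analytical input---maximum principle, $L^p$ and Schauder estimates, Sobolev embedding, spectral theory of the relevant Laplacian---applies on the compact real $(n+2)$--manifold $X$ without modification, and the algebraic identities controlling the flow agree with the Li--Yau identities by the dictionary above. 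Letting $\varepsilon \to 0$ produces either the desired Hermitian--Einstein metric, or an $L^2_1$ weakly holomorphic projection $\pi_\infty$ violating the slope inequality.

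The main obstacle is this last alternative: one must show that such a $\pi_\infty$ defines a genuine destabilizing subsheaf, contradicting stability. The remedy, as in \cite{Lo09}, is to apply the Uhlenbeck--Yau regularity theorem to the holomorphic extension $\pi_\infty^\CC$ on $M^\CC \times \PP^1$, which produces a coherent analytic subsheaf of $E^\CC$. Because the continuity--method data on $X$ are constant along the fibers of $TM \times \PP^1 \longrightarrow X$, so is $\pi_\infty^\CC$, hence the resulting subsheaf descends to an $S^{0,1}$--partially flat subsheaf of $E$ whose slope, computed by $\int_X \frac{c_1 \wedge \Omega_g^n}{p^\ast \nu}$ and controlled via Proposition \ref{int-parts}, provides the required contradiction.
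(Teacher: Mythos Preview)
Your proposal is correct and follows essentially the same approach as the paper's outline: both rely on the dictionary of Section \ref{sec3} to transport the Li--Yau/Uhlenbeck--Yau argument from $M^\CC \times \PP^1$ to $X$, checking that local calculations match verbatim and that integration by parts is handled by Proposition \ref{int-parts}. Your write-up simply supplies more of the standard details (the Chern--Weil computation for the easy direction, the perturbed continuity equation and the regularity argument for the destabilizing subsheaf in the hard direction) than the paper's brief outline does.
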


\begin{proof}[Outline of proof]
As in the proof of Theorem \ref{loftin-thm} above, we have been careful
to ensure that the objects we define on $X$ correspond exactly to objects
on the complex manifold $M^\CC\times\PP^1$ which are constant along the
fibers of the projection $M^\CC \times \PP^1\,\longrightarrow\, X$.  Thus we
may mimic the proof of the Donaldson--Uhlenbeck--Yau correspondence in the complex case as long as we check two things:
\begin{itemize}
\item The local calculations on $X$ correspond exactly to calculations on the complex manifold $M^\CC \times \PP^1$.
\item Integration by parts works.
\end{itemize}
Above, we deal with the first of these by introducing partial connections on $X$. The integration by parts also follows above since $\omega_{\PP^1}$ is K\"ahler and $\omega_M$ is Gauduchon.
\end{proof}

\section{Dimensional reduction}

Let $(M \,, D \,, \nu)$ be a compact special affine manifold of dimension $n$
equipped with an affine Gauduchon metric $g_M$.

\begin{definition}
A {\em flat pair\/} on $M$ is a pair $((E \,, \nabla_E) \,, \phi)$ (or $(E \,,
\phi)$ if $\nabla_E$ is understood from the context) consisting of a flat
complex vector bundle $(E \,, \nabla_E)$ over $M$, and a nonzero flat section
$\phi$ of $E$.
\end{definition}

\begin{definition}
Let $(E \,, \phi)$ be a flat pair on $M$, and let $\tau$ be a real number.
\begin{enumerate}
\item[(i)] $(E \,, \phi)$ is called {\em $\tau$--stable\/} (with respect to
$g_M$) if the following two conditions are satisfied:
\begin{itemize}
\item $\mu_g(E') < \tau$ for every flat subbundle $E'$ of $E$ with $\rank(E') > 0$.
\item $\mu_g(E/E') > \tau$ for every flat subbundle $E'$ of $E$ with $0 < \rank(E') < \rank(E)$ containing the image of the section $\phi$.
\end{itemize}
\item[(ii)] $(E \,, \phi)$ is called {\em $\tau$--polystable\/} (with respect
to $g_M$) if it is either $\tau$--stable or $E$ decomposes as a direct sum of
flat subbundles
\[
  E \,=\, E' \oplus E''
\]
such that $\phi$ is a section of $E'$, the flat pair $(E' \,,
\phi)$ is $\tau$--stable, and the flat vector bundle $E''$ is polystable with
slope $\mu_g(E'') = \frac{\tau}{n}$, where $n\, =\, \dim M$.
\end{enumerate}
\end{definition}

\begin{definition}
Given a flat pair $(E \,, \phi)$ on $M$ and a real number $\tau$, a smooth Hermitian metric $h$ on $E$ is said to satisfy the {\em $\tau$--vortex equation\/} if
\begin{equation} \label{vortex}
  K_h + \frac{1}{2} \, \phi \circ \phi^\ast - \frac{\tau}{2} \, \id_E \,=\, 0 \, ,
\end{equation}
where $K_h$ is the extended mean curvature of $(E \,, h)$, $\phi$ is regarded as a homomorphism from the trivial Hermitian line bundle on $M$ to $E$, and $\phi^\ast$ denotes its adjoint with respect to $h$.
\end{definition}

As mentioned above, a flat complex vector bundle $E$ over $M$ corresponds to a holomorphic vector bundle $E^\CC$ over $M^\CC$ which is constant along the fibers of $M^\CC \,=\, TM \longrightarrow M$. A nonzero flat section $\phi$ of $E$ (respectively, a smooth Hermitian metric $h$ on $E$) corresponds to a nonzero holomorphic section $\phi^\CC$ of $E^\CC$ (respectively, a smooth Hermitian metric $h^\CC$ on $E^\CC$) which is constant along the fibers of $TM$. Since the extended mean curvature $K_h \,=\, \tr_g R_h$ is the restriction to $M$ of the usual mean curvature of $h^\CC$ on $E^\CC$, the Hermitian metric $h$ satisfies the $\tau$--vortex equation \eqref{vortex} if and only if $h^\CC$ satisfies the usual $\tau$--vortex equation (see \cite[(2.6c)]{Br90}) for the holomorphic pair $(E^\CC \,, \phi^\CC)$ over $M^\CC$.

We can now state our main result.

\begin{theorem} \label{main}
Let $(M \,, D \,, \nu)$ be a compact connected special affine manifold equipped
with an affine Gauduchon metric $g_M$, and let $(E \,, \phi)$ be a flat pair on
$M$. Let $\tau$ be a real number; define
\[
  \widehat \tau \,:=\, \frac{\tau \cdot \vol(M)}{2} \, ,
\]
where $\vol(M) \,=\, \int_M \frac{\omega_M^n}{\nu}$ is the volume of $M$
with respect to $g_M$. If $(E \,, \phi)$ is $\widehat \tau$--stable, then there
exists a smooth Hermitian metric on $E$ satisfying the $\tau$--vortex equation.
\end{theorem}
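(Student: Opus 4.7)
The plan is to adapt the dimensional reduction technique of Garc\'ia-Prada \cite{GP94b} to the affine setting, converting the $\tau$-vortex equation for $(E,\phi)$ on $M$ into the Hermitian--Einstein equation on a suitably constructed $S^{0,1}$-partially flat vector bundle on $X = M \times \PP^1$, and then invoking Theorem~\ref{duy}.

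From the flat pair $(E,\phi)$ I would first build an $S^{0,1}$-partially flat vector bundle $\mathcal{E}$ on $X$, modeled as a non-trivial extension
\begin{equation*}
0 \,\longrightarrow\, p^{\ast}E \tensor q^{\ast}\mathcal{O}_{\PP^1}(-1) \,\longrightarrow\, \mathcal{E} \,\longrightarrow\, q^{\ast}\mathcal{O}_{\PP^1}(1) \,\longrightarrow\, 0,
\end{equation*}
whose extension class is the image of $\phi$ under the canonical isomorphism $\mathrm{Ext}^{1}(q^{\ast}\mathcal{O}(1),\, p^{\ast}E \tensor q^{\ast}\mathcal{O}(-1)) \,\cong\, H^{0}(M, E)$ obtained from the K\"unneth formula together with $H^{1}(\PP^{1}, \mathcal{O}(-2)) \,\cong\, \CC$. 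Since $\phi$ is flat, the extension cocycle is locally constant along $M$ and holomorphic along $\PP^{1}$, so $\mathcal{E}$ is indeed $S^{0,1}$-partially flat; it also inherits a natural $\SU(2)$-action covering the standard action on $\PP^{1}$ and trivial on $M$.

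Next I would implement the two translations underlying the reduction. For the analytic equation: an $\SU(2)$-invariant Hermitian metric on $\mathcal{E}$ is determined, with respect to the smooth splitting of the extension, by a metric $h$ on $E$ together with the standard $\SU(2)$-invariant Fubini--Study metrics on $\mathcal{O}(\pm 1)$. A direct computation of the extended mean curvature $K$ on $X$ with respect to $g = p^{\ast}g_{M} \oplus q^{\ast}g_{\PP^{1}}$, performed by restricting the corresponding computation on $M^{\CC} \times \PP^{1}$ exactly as in \cite{GP94b}, shows that the Hermitian--Einstein equation $K = \gamma \cdot \id_{\mathcal{E}}$ for such an invariant $h$ is equivalent to the $\tau$-vortex equation \eqref{vortex} for $(E,\phi)$, with $\gamma$ determined by $\tau$ via fiber integration over $\PP^{1}$ using $\int_{\PP^{1}} \omega_{\PP^{1}} = 1$. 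For the algebraic side: any flat subbundle $E' \subset E$ produces either a sub-extension of $\mathcal{E}$ (when $\phi$ lies in $E'$) or a partially flat subbundle of the form $p^{\ast}E' \tensor q^{\ast}\mathcal{O}(-1)$ for general $E'$, and a slope comparison on $X$ versus $M$, using that $\mathcal{O}_{\PP^1}(\pm 1)$ has degree $\pm 1$ and the relation $\widehat{\tau} = \tau \vol(M)/2$, shows that the two inequalities in the definition of $\widehat{\tau}$-stability of $(E,\phi)$ translate precisely into the stability inequality $\mu_{g}(\mathcal{F}) < \mu_{g}(\mathcal{E})$ for all $\SU(2)$-invariant partially flat subbundles $\mathcal{F}$ of $\mathcal{E}$.

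The main obstacle, as in Garc\'ia-Prada's original argument, is that stability of $\mathcal{E}$ in the sense of Theorem~\ref{duy} must be checked against \emph{all} partially flat subbundles, not only the $\SU(2)$-invariant ones. To bridge this gap I would use the standard equivariant reduction: given any destabilizing partially flat subbundle $\mathcal{F} \subset \mathcal{E}$, the family $\{g \cdot \mathcal{F}\}_{g \in \SU(2)}$ of $\SU(2)$-translates consists of partially flat subbundles of the same rank and slope, and by connectedness of $\SU(2)$ together with a limit argument on the associated destabilizing coherent subsheaves, one extracts an $\SU(2)$-invariant partially flat subbundle of $\mathcal{E}$ of slope at least $\mu_g(\mathcal{F})$. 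The $\widehat{\tau}$-stability hypothesis then forces $\mathcal{E}$ to be stable, and Theorem~\ref{duy} produces a Hermitian--Einstein metric on $\mathcal{E}$; by uniqueness up to positive scalar, this metric is automatically $\SU(2)$-invariant, and the first translation above then yields the desired smooth Hermitian metric on $E$ satisfying the $\tau$-vortex equation \eqref{vortex}.
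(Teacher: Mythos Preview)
Your overall strategy---dimensional reduction to $X = M \times \PP^1$, constructing an $\SU(2)$-equivariant partially flat extension bundle, and invoking Theorem~\ref{duy}---is exactly the paper's approach. Your extension built from $q^*\mathcal{O}_{\PP^1}(\pm 1)$ differs from the paper's $0 \to p^*E \to F \to q^*T\PP^1 \to 0$ (Proposition~\ref{correspondence}) only by a twist by $q^*\mathcal{O}_{\PP^1}(1)$, and your appeal to uniqueness of the Hermitian--Einstein metric on a stable bundle to obtain $\SU(2)$-invariance is a legitimate alternative to the paper's averaging over $\SU(2)$.

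There is, however, a genuine gap. You work throughout with the single metric $g = p^*g_M \oplus q^*g_{\PP^1}$ and assert that the Hermitian--Einstein equation on $\mathcal{E}$ is equivalent to the $\tau$-vortex equation ``with $\gamma$ determined by $\tau$.'' But once the metric on $X$ is fixed, the Einstein constant $\gamma$ is forced by $\mu_g(\mathcal{E})$, which depends only on $\deg(E)$ and $\rank(E)$; so only one particular value of $\tau$ can arise, and there is no free parameter to match a prescribed $\tau$. The paper (following \cite{GP94b}) resolves this by introducing the one-parameter family of Gauduchon metrics $\Omega_\sigma = p^*\omega_M - \sqrt{-1}\,\sigma\,q^*\omega_{\PP^1}$; the essential content of Propositions~\ref{stable} and~\ref{hermitian-einstein} is the explicit relation
\[
\sigma \;=\; \frac{4\pi \cdot \vol(M)}{n\,(\rank(E)+1)\,\widehat\tau - n\cdot\deg(E)}
\]
that makes both the stability correspondence and the equation correspondence hold for the given $\widehat\tau$. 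Without this scaling your slope comparisons cannot be tuned to an arbitrary $\widehat\tau$, and the argument does not close. You also omit the case where $E$ is the trivial flat line bundle, which Proposition~\ref{stable} explicitly excludes and which the paper treats directly by exhibiting the metric $h = (\tau/|\phi|^2)\,h_0$.
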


The proof of Theorem \ref{main} relies on the technique of {\em dimensional
reduction}, which was developed in the K\"ahler case by Garc\'{i}a-Prada (see
\cite{GP94b}). We will now extend it to our context.

Define $X \,:=\, M \times \PP^1$ as in Section \ref{sec3}, and the projections
$p$ and $q$ as in \eqref{pq}. Let $(E \,, \nabla_E)$ be
a flat complex vector bundle on $M$. Since $S^{0,1}$ in \eqref{dds} contains
$p^\ast T_\CC M$, there is a unique flat partial connection $p^\ast \nabla_E$ on
the smooth vector bundle $p^\ast E$ in the direction of $S^{0,1}$ such that
\[
  (p^\ast \nabla_E)(p^\ast s) \,=\, p^\ast(\nabla_E s)
\]
for every smooth section $s$ of $E$, where the right--hand side is considered as a section of $(S^{0,1})^\ast \tensor p^\ast E$ via the inclusion
\[
  p^\ast T_\CC^\ast M \tensor p^\ast E \,\longhookrightarrow\, (S^{0,1})^\ast \tensor p^\ast E \, .
\]
Similarly, for the holomorphic tangent bundle $(T\PP^1 = T^{1,0}\PP^1 \,,
\dbar_{T\PP^1})$ of $\PP^1$, there is a unique flat partial connection
$q^\ast \dbar_{T\PP^1}$ on the smooth vector bundle $q^\ast T\PP^1$ in the
direction of $S^{0,1}$ such that
\[
  (q^\ast \dbar_{T\PP^1})(q^\ast s) \,=\, q^\ast(\dbar_{T\PP^1} s)
\]
for every smooth section $s$ of $T\PP^1$.

Consider the trivial action of $\SU(2)$ on $M$ and the standard action of
$\SU(2)$ on $\PP^1$ (the left--translation action on $\SU(2)/\U(1)\,=\,
\PP^1$). They together define the diagonal action of $\SU(2)$ on $X$.
Then both the smooth vector bundles $p^\ast E$ and $q^\ast T\PP^1$
are equipped with natural equivariant actions of $\SU(2)$. Define the
vector bundle
\[
  F \,:=\, p^\ast E \oplus q^\ast T\PP^1\, .
\]

Now consider the $\SU(2)$--equivariant extension
\[
  0 \,\longrightarrow\, p^\ast E \,\stackrel{\iota}{\longrightarrow}\, F \,\stackrel{\pi}{\longrightarrow}\, q^\ast T\PP^1 \,\longrightarrow\, 0
\]
of smooth vector bundles on $X$, where $\iota$ and $\pi$ respectively are the
natural inclusion and projection maps.

\begin{proposition} \label{correspondence}
There is a bijective correspondence between the following two:
\begin{itemize}
\item[(i)]  flat sections $\phi$ of $(E \,, \nabla_E)$;
\item[(ii)] $\SU(2)$--invariant flat partial connections $\nabla_F$ on $F$ in the direction of $S^{0,1}$ such that
\begin{equation} \label{extension}
  0 \,\longrightarrow\, (p^\ast E \,, p^\ast \nabla_E) \,\stackrel{\iota}{\longrightarrow}\, (F \,, \nabla_F) \,\stackrel{\pi}{\longrightarrow}\, (q^\ast T\PP^1 \,, q^\ast \dbar_{T\PP^1}) \,\longrightarrow\, 0
\end{equation}
is an extension of $S^{0,1}$--partially flat vector bundles.
\end{itemize}
\end{proposition}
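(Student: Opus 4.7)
My plan is to reduce the data of an $\SU(2)$--invariant flat partial connection $\nabla_F$ compatible with \eqref{extension} to a single off--diagonal tensor, show that $\SU(2)$--invariance forces this tensor into a specific shape involving $\omega_{\PP^1}$, and then recognise the partial--connection flatness condition as the requirement that the remaining datum be a flat section of $E$. Using the smooth direct--sum splitting $F \,=\, p^\ast E \oplus q^\ast T\PP^1$, any $\nabla_F$ realising \eqref{extension} has the form
\[
  \nabla_F(s \oplus t) \,=\, \bigl(p^\ast \nabla_E(s) + \beta(t)\bigr) \oplus q^\ast \dbar_{T\PP^1}(t)
\]
for a unique ``second fundamental form'' $\beta \in \A^{0,1}(X \,, p^\ast E \tensor q^\ast T^\ast \PP^1)$. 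Because the two diagonal partial connections are already flat, the curvature $R(\nabla_F)$ reduces to $\dbar \beta$, where $\dbar$ is the $S^{0,1}$--differential induced by $p^\ast \nabla_E$ and $q^\ast \dbar_{T\PP^1}$. Hence $\nabla_F$ is flat iff $\dbar \beta \,=\, 0$, and $\SU(2)$--invariance of $\nabla_F$ is the same as $\SU(2)$--invariance of $\beta$.

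The main step is then the $\SU(2)$--reduction of $\beta$. Using $(S^{0,1})^\ast \,=\, p^\ast T_\CC^\ast M \oplus q^\ast (T^{0,1}\PP^1)^\ast$, split $\beta \,=\, \beta^M + \beta^{\PP^1}$. The piece $\beta^M$ lies in $C^\infty(X \,, p^\ast(T_\CC^\ast M \tensor E) \tensor q^\ast T^\ast\PP^1)$; since $\SU(2)$ acts trivially on $M$, its $\SU(2)$--invariants embed into $C^\infty(\PP^1 \,, T^\ast\PP^1)^{\SU(2)}$, which vanishes because at any point of $\PP^1$ the isotropy $\U(1) \subset \SU(2)$ acts with nonzero weight on the fibre of $T^\ast \PP^1 \,\cong\, \mathcal{O}_{\PP^1}(-2)$. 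Hence $\beta^M \,=\, 0$. The piece $\beta^{\PP^1}$ lies in $C^\infty(X \,, p^\ast E \tensor q^\ast (T^{0,1}\PP^1)^\ast \tensor q^\ast T^\ast\PP^1)$; at each $m \in M$ it restricts to a smooth $E_m$--valued $(1 \,, 1)$--form on $\PP^1$, and the space of $\SU(2)$--invariant smooth $(1 \,, 1)$--forms on $\PP^1$ is one--dimensional, spanned by $\omega_{\PP^1}$. Therefore $\beta \,=\, p^\ast \psi \tensor q^\ast \omega_{\PP^1}$ for a unique smooth section $\psi$ of $E$ over $M$.

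Since $\omega_{\PP^1}$ is K\"ahler, and therefore $\dbar$--closed,
\[
  \dbar \beta \,=\, p^\ast(\nabla_E \psi) \wedge q^\ast \omega_{\PP^1} + p^\ast \psi \wedge q^\ast(\dbar \omega_{\PP^1}) \,=\, p^\ast(\nabla_E \psi) \wedge q^\ast \omega_{\PP^1} \, ,
\]
so $\dbar \beta \,=\, 0$ iff $\nabla_E \psi \,=\, 0$, i.e.\ $\psi$ is a flat section of $E$. Setting $\phi \,:=\, \psi$ defines the correspondence, and the uniqueness of $\psi$ at each stage ensures that it is bijective (with $\phi \,=\, 0$ corresponding to the direct--sum partial connection). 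The principal obstacle I anticipate is carrying out the $\SU(2)$--invariance reduction rigorously at the smooth (rather than holomorphic) level---one must verify that $\SU(2)$--invariant smooth sections of $T^\ast\PP^1$ vanish and that $\SU(2)$--invariant smooth $(1 \,, 1)$--forms on $\PP^1$ are scalar multiples of $\omega_{\PP^1}$, both of which follow from transitivity of the $\SU(2)$--action on $\PP^1$ combined with the $\U(1)$--isotropy weights on the relevant fibres. After that the rest is bookkeeping analogous to Garc\'ia--Prada's treatment of the K\"ahler case.
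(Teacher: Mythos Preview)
Your proof is correct and follows essentially the same approach as the paper's: both reduce the data to the off--diagonal second fundamental form $\beta$, use $\SU(2)$--invariance to kill the $p^\ast T_\CC^\ast M$--component and force the $\PP^1$--component to be $p^\ast\phi$ tensored with a fixed invariant section of $(T^{0,1}\PP^1)^\ast \tensor (T^{1,0}\PP^1)^\ast$ (the paper calls this section $\alpha$; you take $\omega_{\PP^1}$, which is the same up to a scalar), and then identify flatness of $\nabla_F$ with $\nabla_E\phi = 0$. The only cosmetic differences are your isotropy--weight argument versus the paper's ``every smooth section of $(T^{1,0}\PP^1)^\ast$ must vanish somewhere, hence by transitivity an invariant one vanishes identically'' for eliminating the $M$--component, and your appeal to $\dbar\omega_{\PP^1}=0$ in the flatness step (which in any case holds automatically since $\A^{0,2}(\PP^1 \,, (T^{1,0}\PP^1)^\ast)=0$).
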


\begin{proof}
Fix a nonzero $\SU(2)$--invariant section
\[
  \alpha \,\in\, C^\infty(\PP^1 \,, (T^{0,1} \PP^1)^\ast \tensor (T^{1,0} \PP^1)^\ast)
  \,=\, \A^{0,1}(\PP^1 \,, (T^{1,0} \PP^1)^\ast) \, .
\]
Two such sections differ by an $\SU(2)$--invariant complex--valued function,
which must be constant due to the transitivity property of the $\SU(2)$--action
on $\PP^1$. Therefore, $\alpha$ is unique up to a constant factor.

Given a flat section $\phi$ of $(E \,, \nabla_E)$, define a partial connection
on $F$ in the direction of $S^{0,1}$
\[
  \nabla_F \,:=\, \begin{pmatrix}
 p^\ast \nabla_E & p^\ast \phi \tensor q^\ast \alpha \\
0  & q^\ast \dbar_{T\PP^1}
\end{pmatrix}
\]
with respect to the decomposition $F\,=\, p^\ast E \oplus q^\ast T\PP^1$.

Note that $p^\ast \phi \tensor q^\ast \alpha$ is a section of $\A^{0,1}(X \,,
\Hom(q^\ast T\PP^1 \,, p^\ast E))$. Clearly, $\nabla_F$ is $\SU(2)$--invariant,
and we have an extension as in \eqref{extension}. From the given condition that
$\nabla_E(\phi)\,=\, 0$ it follows that $\nabla_F$ is flat.

Conversely, given an $\SU(2)$--invariant flat partial connection $\nabla_F$ on
$F$ in the direction of $S^{0,1}$ as in (ii), it can be written as
\[
 \nabla_F \,=\, \begin{pmatrix}
 p^\ast \nabla_E & \beta \\
 0  & q^\ast \dbar_{T\PP^1}
  \end{pmatrix}
\]
in terms of the decomposition $F \,= \, p^\ast E \oplus q^\ast T\PP^1$, where
$\beta$ is an $\SU(2)$--invariant section of $\A^{0,1}(X \,, \Hom(q^\ast
T\PP^1 \,, p^\ast E))$. We have
\begin{gather*}
  \A^{0,1}(X \,, \Hom(q^\ast T\PP^1 \,, p^\ast E))
  \,=\, (S^{0,1})^\ast \tensor p^\ast E \tensor q^\ast (T^{1,0}\PP^1)^\ast \\
\cong\, \big(p^\ast(T_\CC^\ast M \tensor E) \tensor q^\ast (T^{1,0}\PP^1)^\ast\big)
 \,\oplus\, \big(p^\ast E \tensor q^\ast((T^{0,1} \PP^1)^\ast \tensor
(T^{1,0}\PP^1)^\ast)\big) \, .
\end{gather*}
One can see that the $\SU(2)$--invariant part of the first summand is zero by
restricting an $\SU(2)$--invariant element to the $\PP^1$--fibers and observing
that every section of $(T^{1,0} \PP^1)^\ast$ has to vanish at some point and
then by the transitivity of the $\SU(2)$--action it has to vanish everywhere.
Since $\alpha$ is a non--vanishing $\SU(2)$--invariant section of $(T^{0,1}
\PP^1)^\ast \tensor (T^{1,0}\PP^1)^\ast$, it follows that $\beta \,=\, p^\ast
\phi \tensor q^\ast \alpha$ for a unique smooth section
$\phi$ of $E$ (the section $\alpha$ was defined earlier). The flatness of
$\nabla_F$ then implies that $\phi$ is a flat section of $(E \,, \nabla_E)$.

The above two constructions are clearly inverses of each other.
\end{proof}

Let $\sigma$ be a positive real number. Define $g_\sigma$ to be the Hermitian
metric on $X$ with associated $(1 \,, 1)$--form
\[
  \Omega_\sigma \,:=\, p^\ast \omega_M - \sqrt{-1} \, \sigma q^\ast \omega_{\PP^1} \, ;
\]
it is a Gauduchon metric on $X$ because $g_M$ is an affine Gauduchon metric on
$M$. The degree of an $S^{0,1}$--partially flat vector bundle $E$ on $X$ with
respect to $g_\sigma$ will be denoted by $\deg_\sigma(E)$.

\begin{lemma} \label{lemma} \mbox{}
\begin{enumerate}
\item[(i)] If $E$ is a flat vector bundle over $M$, then
\[
  \deg_\sigma(p^\ast E) \,=\, n \sigma\cdot \deg(E) \, .
\]
\item[(ii)] If $V$ is a holomorphic vector bundle over $\PP^1$, then
\[
  \deg_\sigma(q^\ast V) \,=\, 2 \pi \cdot\vol(M) \deg(V) \, .
\]
\end{enumerate}
\end{lemma}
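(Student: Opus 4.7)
The plan is to prove both statements by direct computation: choose pullback Hermitian metrics on $p^*E$ and $q^*V$ respectively, note that the extended first Chern form is then itself a pullback ($c_1(p^*E, p^*h) = p^*c_1(E, h)$ and $c_1(q^*V, q^*h_V) = q^*c_1(V, h_V)$), and reduce the integral on $X$ to a product of integrals over $M$ and $\PP^1$ via Fubini. The key algebraic input is the binomial expansion
\[
\Omega_\sigma^n \,=\, p^*(\omega_M^n) \,-\, \sqrt{-1}\,n\sigma\, p^*(\omega_M^{n-1}) \wedge q^*\omega_{\PP^1},
\]
which truncates at the linear term because $(q^*\omega_{\PP^1})^2 = 0$ on the complex one-dimensional manifold $\PP^1$ and $\omega_M^{n+1} = 0$ on the $n$-dimensional manifold $M$.

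For (i), pull back any Hermitian metric $h$ on $E$ to $p^*E$. Wedging $p^*c_1(E,h)$ with the expansion above, the first summand vanishes since $c_1(E,h) \wedge \omega_M^n$ would be of bidegree $(n+1, n+1)$ on $M$. Hence
\[
\deg_\sigma(p^*E) \,=\, -\sqrt{-1}\,n\sigma\int_X \frac{p^*(c_1(E,h) \wedge \omega_M^{n-1}) \wedge q^*\omega_{\PP^1}}{p^*\nu}.
\]
A separation-of-variables argument factors the right-hand side into $\int_M c_1(E,h)\wedge\omega_M^{n-1}/\nu = \deg(E)$ times an integral on $\PP^1$ involving $\omega_{\PP^1}$; combining the normalization $\int_{\PP^1}\omega_{\PP^1} = 1$ with the explicit $\sqrt{-1}$ appearing in the definition of division by $p^*\nu$ yields exactly the factor $n\sigma$.

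For (ii), pull back any Hermitian metric $h_V$ on $V$ to $q^*V$. This time, wedging $q^*c_1(V, h_V)$ with the expansion of $\Omega_\sigma^n$, the linear summand vanishes because $c_1(V, h_V)\wedge\omega_{\PP^1}$ would be a $(2,2)$-form on the complex one-dimensional $\PP^1$. Only the $p^*\omega_M^n$ piece survives, and Fubini gives, up to phase,
\[
\deg_\sigma(q^*V) \,=\, \vol(M) \cdot \int_{\PP^1} c_1(V, h_V) \cdot (\text{phase}).
\]
The paper's normalization $c_1(V, h_V) = -\del\dbar\log\det h_V$ differs from the topological first Chern form by a factor of $-2\pi\sqrt{-1}$, so $\int_{\PP^1} c_1(V, h_V) = -2\pi\sqrt{-1}\deg(V)$; combining this with the $\sqrt{-1}$ produced by division by $p^*\nu$ cancels $i^2 = -1$ against the minus sign to yield the stated real value $2\pi\cdot\vol(M)\deg(V)$.

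The structural argument---pullback, binomial truncation, Fubini---is routine. The main obstacle is bookkeeping: one must carefully track the various factors of $\sqrt{-1}$ and the signs $(-1)^{n(n+1)/2}$ and $(-1)^{l_1 k_2}$ encoded in the division-by-$p^*\nu$ map and in the wedge product on $\A^{*,*}(X)$. Confirming that these phases combine to produce the stated real constants, rather than spurious imaginary factors, is the delicate portion of the computation.
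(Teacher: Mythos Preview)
Your proposal is correct and follows essentially the same route as the paper: pull back a Hermitian metric, use $c_1(p^*E,p^*h)=p^*c_1(E,h)$ (respectively $c_1(q^*V,q^*h_V)$ equal to $q^*$ of the standard Chern form up to the $-2\pi\sqrt{-1}$ normalization), expand $\Omega_\sigma^n$ binomially so that only one term survives in each case, and apply Fubini together with $\int_{\PP^1}\omega_{\PP^1}=1$. The paper carries out the sign and $\sqrt{-1}$ bookkeeping explicitly rather than leaving it as a ``delicate portion,'' but the argument is otherwise identical.
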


\begin{proof}
If $E$ is a flat vector bundle over $M$ and $h$ is a smooth Hermitian metric on $E$, we have
\[ \begin{split}
  \deg_\sigma(p^\ast E)
  &\,=\, \int_X \frac{c_1(p^\ast E \,, p^\ast h) \wedge \Omega_\sigma^n}{p^\ast \nu} \\
  &\,=\, - \sqrt{-1} \, n \sigma \int_X \frac{p^\ast c_1(E \,, h) \wedge p^\ast \omega_M^{n-1} \wedge q^\ast \omega_{\PP^1}}{p^\ast \nu} \\
  &\,=\, n \sigma \int_X p^\ast\left(\frac{c_1(E \,, h) \wedge \omega_M^{n-1}}{\nu}\right) \wedge q^\ast \omega_{\PP^1} \\
  &\,=\, n \sigma \int_M \frac{c_1(E \,, h) \wedge \omega_M^{n-1}}{\nu} \\
  &\,=\, n \sigma \cdot \deg(E)
\end{split} \]
since $\int_{\PP^1} \omega_{\PP^1} = 1$, thus proving (i).

For (ii), let $h$ be a smooth Hermitian metric on $V$. Then we have
\[ \begin{split}
  \deg_\sigma(q^\ast V)
  &\,=\, \int_X \frac{c_1(q^\ast V \,, q^\ast h) \wedge \Omega_\sigma^n}{p^\ast \nu} \\
  &\,=\, \int_X \frac{-2 \pi \sqrt{-1} \, q^\ast c_1(V \,, h) \wedge p^\ast \omega_M^n}{p^\ast \nu} \\
  &\,=\, 2 \pi \int_X q^\ast c_1(V \,, h) \wedge p^\ast\left(\frac{\omega_M^n}{\nu}\right) \\
  &\,=\, 2 \pi \left(\int_M \frac{\omega_M^n}{\nu}\right) \left(\int_{\PP^1} c_1(V \,, h)\right) \\
  &\,=\, 2 \pi \cdot \vol(M) \deg(V) \, ,
\end{split} \]
which proves (ii). Note that our definition of the first Chern form on $X$
imitates the definition of the first Chern form on affine manifolds given in
\cite{Lo09} and thus differs from the usual definition on complex manifolds,
which accounts for the factor $- 2 \pi \sqrt{-1}$ in the second line.
\end{proof}

\begin{corollary}
For an extension
\[
  0 \,\longrightarrow\, (p^\ast E \,, p^\ast \nabla_E) \,\stackrel{\iota}{\longrightarrow}\, (F \,, \nabla_F) \,\stackrel{\pi}{\longrightarrow}\, (q^\ast T\PP^1 \,, q^\ast \dbar_{T\PP^1}) \,\longrightarrow\, 0
\]
as in Proposition \ref{correspondence}, we have
\[
  \deg_\sigma(F) \,=\, n \sigma \cdot \deg(E) + 4 \pi \cdot\vol(M) \, .
\]
\end{corollary}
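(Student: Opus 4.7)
The plan is to combine additivity of the degree on short exact sequences with the two computations already carried out in Lemma \ref{lemma}. More precisely, I would first show that for any short exact sequence of $S^{0,1}$--partially flat vector bundles on $X$,
\[
  0 \,\longrightarrow\, E' \,\longrightarrow\, F \,\longrightarrow\, E'' \,\longrightarrow\, 0 \, ,
\]
the identity $\deg_\sigma(F) \,=\, \deg_\sigma(E') + \deg_\sigma(E'')$ holds. This is a routine consequence of the construction of the extended first Chern form: choosing a smooth Hermitian metric $h_F$ on $F$ which, via the $C^\infty$--splitting of the extension, restricts to Hermitian metrics $h'$ on $E'$ and $h''$ on $E''$, the matrix $(h_{F,\alpha\bar\beta})$ is block upper triangular up to smooth terms, hence
\[
  \log \det(h_{F,\alpha\bar\beta}) \,=\, \log \det(h'_{\alpha\bar\beta}) + \log \det(h''_{\alpha\bar\beta}) \, ,
\]
and applying $-\del \dbar$ gives $c_1(F\, ,h_F) \,=\, c_1(E'\, ,h') + c_1(E''\, ,h'')$. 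Wedging with $\Omega_\sigma^n$, dividing by $p^*\nu$ and integrating over $X$ then yields the additivity.

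Applied to the given extension, this gives
\[
  \deg_\sigma(F) \,=\, \deg_\sigma(p^*E) + \deg_\sigma(q^*T\PP^1) \, .
\]
Now Lemma \ref{lemma}(i) gives $\deg_\sigma(p^*E) \,=\, n\sigma \cdot \deg(E)$, while Lemma \ref{lemma}(ii) applied to $V = T\PP^1$ gives
\[
  \deg_\sigma(q^*T\PP^1) \,=\, 2\pi \cdot \vol(M) \cdot \deg(T\PP^1) \,=\, 4\pi \cdot \vol(M) \, ,
\]
since $\deg(T\PP^1) \,=\, 2$. Summing these two contributions yields the claimed formula.

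The only potentially delicate step is the additivity of the degree, because in the partial connection setting one must verify that the extended first Chern form behaves correctly under smooth (not holomorphic) splittings. However, since all the relevant objects on $X$ are restrictions of their counterparts on $M^\CC \times \PP^1$ that are constant along the fibers of the projection (as emphasized in Section \ref{sec3}), and since $c_1(E\, ,h)$ is defined via $-\del\dbar \log \det(h_{\alpha\bar\beta})$ in a $\nabla$--flat frame just as in the complex case, the additivity reduces formally to the standard complex--manifold computation. Thus I expect no genuine obstacle; the corollary is essentially an arithmetic consequence of Lemma \ref{lemma}.
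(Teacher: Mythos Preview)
Your proposal is correct and follows exactly the paper's approach: the paper invokes the additivity $\deg_\sigma(F)=\deg_\sigma(p^\ast E)+\deg_\sigma(q^\ast T\PP^1)$ (citing \cite[(23)]{Lo09} rather than reproving it), then applies Lemma~\ref{lemma} together with $\deg(T\PP^1)=2$. Your extra paragraph justifying additivity is fine in spirit, though the cleanest route is to note that $\det F\cong \det(p^\ast E)\otimes \det(q^\ast T\PP^1)$ as $S^{0,1}$--partially flat line bundles, which makes the additivity of $c_1$ immediate without worrying about block structures of metric matrices in non--flat splittings.
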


\begin{proof}
As in \cite[(23)]{Lo09}, we have
\[
  \deg_\sigma(F) \,=\, \deg_\sigma(p^\ast E) + \deg_\sigma(q^\ast T\PP^1) \, .
\]
The corollary then follows from Lemma \ref{lemma} and the fact that
$\deg(T\PP^1) \,=\, 2$.
\end{proof}

Using these formulas, and the correspondence between $S^{0,1}$--partially flat
vector bundles over $X\,= \,M \times \PP^1$ and holomorphic vector bundles over
$M^\CC \times\PP^1$ which are constant along the fibers of $M^\CC \,=\, TM
\longrightarrow M$, the following results from \cite{GP94b} immediately carry over to our situation. (See \cite[Theorem 4.9, Propositions 3.2, 3.11]{GP94b}.)

\begin{proposition} \label{stable}
Let $(E \,, \phi)$ be a flat pair on $M$ such that $E$ is not the trivial flat
line bundle, and let $(F \,, \nabla_F)$ be the
$\SU(2)$--equivariant $S^{0,1}$--partially flat vector bundle over $X$
corresponding to $(E \,, \phi)$ by Proposition \ref{correspondence}. Let the
real numbers $\sigma$ and $\tau$ be related by
\[
  \sigma \,=\, \frac{4 \pi \cdot \vol(M)}{n \, (\rank(E) + 1) \, \tau - n \cdot \deg(E)} \, .
\]
Then $(E \,, \phi)$ is $\tau$--stable if and only if $\sigma > 0$ and $F$ is stable with respect to $g_\sigma$.
\end{proposition}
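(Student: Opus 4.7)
The plan is to reduce both $\tau$-stability of $(E,\phi)$ and $g_\sigma$-stability of $F$ to the same pair of slope inequalities for flat subbundles of $E$, using Lemma \ref{lemma}, the preceding Corollary, and a classification of $\SU(2)$-invariant subbundles of $F$. First I rewrite the defining relation: from the Corollary, $\deg_\sigma(F) = n\sigma \deg(E) + 4\pi\vol(M)$ and $\rank(F) = \rank(E)+1$, and a short manipulation shows the formula for $\sigma$ is equivalent to
\[
  \mu_\sigma(F) \,=\, n\sigma\tau \, ,
\]
identifying the test slope on the $F$-side. Equivalently, $4\pi\vol(M)/(n\sigma) = (\rank(E)+1)\tau - \deg(E)$, which will be used repeatedly below.

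Since $\SU(2)$ is connected and preserves $\nabla_F$, the maximal destabilizing subbundle of $F$ (if $F$ fails to be stable) is $\SU(2)$-invariant by uniqueness of the Harder--Narasimhan filtration, so it suffices to check stability against $\SU(2)$-invariant $S^{0,1}$-partially flat subbundles $F' \subset F$. For such an $F'$, the image of the composition $F' \hookrightarrow F \twoheadrightarrow q^\ast T\PP^1$ is either $0$ or, after saturation, all of $q^\ast T\PP^1$, since $q^\ast T\PP^1$ is a line bundle. This yields two cases: (a) $F' \subset p^\ast E$, which forces $F' = p^\ast E'$ for a flat subbundle $E' \subset E$ (the $\SU(2)$-invariance combined with triviality of the action on $M$ guarantees that the fiberwise data is constant along $\{m\} \times \PP^1$); and (b) an $\SU(2)$-equivariant extension $0 \to p^\ast E' \to F' \to q^\ast T\PP^1 \to 0$ with $E' \subset E$ a flat subbundle. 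By Proposition \ref{correspondence} applied to $E'$, this extension is classified by a flat section of $E'$, and compatibility with the ambient extension defining $F$ identifies that section with $\phi$, so $\phi \in E'$. In either case, Lemma \ref{lemma} gives an explicit slope formula: $\mu_\sigma(F') = n\sigma \mu_g(E')$ in case (a), and $\mu_\sigma(F') = [n\sigma \deg(E') + 4\pi\vol(M)]/(\rank(E')+1)$ in case (b). Using $\mu_\sigma(F) = n\sigma\tau$ and assuming $\sigma > 0$, the inequality $\mu_\sigma(F') < \mu_\sigma(F)$ becomes $\mu_g(E') < \tau$ in case (a) and, after substituting the identity from Step 1, $\mu_g(E/E') > \tau$ in case (b); these are precisely the two conditions in the definition of $\tau$-stability of $(E,\phi)$. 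The equivalence runs in both directions since pullback and the extension construction yield destabilizing subbundles of $F$ from destabilizing data on $E$.

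It remains to check that $\tau$-stability actually forces $\sigma > 0$. A flat nonzero section of a flat vector bundle over the connected manifold $M$ is nowhere zero (its vanishing locus is closed under parallel transport), so $\phi$ trivializes a flat line subbundle $L \cong \mathcal{O}_M \subset E$ with $\deg_g(L) = 0$. Since $E$ is not the trivial flat line bundle, $\rank(E) \geq 2$, so $L$ is a proper flat subbundle containing $\phi$. Applying the first stability condition to $E$ itself gives $\deg_g(E) < \tau \rank(E)$, while the second applied to $L$ gives $\deg_g(E) > \tau(\rank(E) - 1)$; these together force $\tau > 0$, and hence $n[(\rank(E)+1)\tau - \deg_g(E)] > 0$, i.e.\ $\sigma > 0$. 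I expect Step 2(b) to be the main obstacle: one must verify that the $\SU(2)$-equivariant extension class of $F'$ is controlled by the \emph{same} section $\phi$ rather than some a priori independent flat section of $E'$, which requires carefully matching the classification of Proposition \ref{correspondence} with the restriction of the ambient extension defining $F$.
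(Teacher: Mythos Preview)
Your argument is correct and is essentially a detailed write-up of the Garc\'ia-Prada argument that the paper invokes by citing \cite[Theorem 4.9]{GP94b}; the paper itself gives no independent proof beyond noting that the degree formulas in Lemma \ref{lemma} allow the holomorphic argument to transfer. Your worry about Step 2(b) is unfounded: once $F'$ is $\nabla_F$-invariant and surjects onto $q^\ast T\PP^1$, the off-diagonal term $p^\ast\phi \otimes q^\ast\alpha$ of $\nabla_F$ must take values in $(S^{0,1})^\ast \otimes p^\ast E'$, which forces $\phi \in E'$ directly---you never actually need to identify the extension class of $F'$ via Proposition \ref{correspondence}, only to know that $E'$ contains $\phi$, which is exactly the hypothesis in the second $\tau$-stability condition.
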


\begin{proposition} \label{hermitian-einstein}
Let $(E \,, \phi)$ be a flat pair over $M$, and let $(F \,, \nabla_F)$ be the
$\SU(2)$--equivariant $S^{0,1}$--partially flat vector bundle over $X$
corresponding to $(E \,, \phi)$ by Proposition \ref{correspondence}.
\begin{enumerate}
\item[(i)] There is a bijective correspondence between the Hermitian metrics on
$E$ and the $\SU(2)$--invariant Hermitian metrics on $F$.

\item[(ii)] If the real numbers $\sigma$ and $\tau$ are related by
\[
  \sigma \,=\, \frac{4 \pi \cdot \vol(M)}{n \, (\rank(E) + 1) \, \widehat \tau - n \cdot \deg E} \, , \quad
  \text{where } \widehat \tau \,=\, \frac{\tau \cdot \vol(M)}{2} \, ,
\]
then a Hermitian metric $h$ on $E$ satisfies the $\tau$--vortex equation if and only if the Hermitian metric on $F$ corresponding to $h$ by (i) is a Hermitian--Einstein metric with respect to $g_\sigma$.
\end{enumerate}
\end{proposition}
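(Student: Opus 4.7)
\emph{Proof proposal.}

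For part (i), I will argue that $\SU(2)$-invariance forces an invariant Hermitian metric on $F = p^*E \oplus q^*T\PP^1$ to split as an orthogonal direct sum. The off-diagonal block is a smooth section of $\Hom(q^*T\PP^1, p^*E) = p^*E \tensor q^*(T^{1,0}\PP^1)^*$; since $\SU(2)$ acts trivially on $M$ while the $\U(1)$-isotropy of any point of $\PP^1$ acts on $(T^{1,0}\PP^1)^*$ by a nontrivial character, an invariant section is forced to vanish at one point and hence everywhere, by the transitivity of $\SU(2)$ on $\PP^1$. The same argument, applied to the line bundle $q^*T\PP^1$, shows that an $\SU(2)$-invariant Hermitian metric on it is unique up to positive scaling, and fixing this scale yields the bijection $h \longleftrightarrow H = p^*h \oplus (\text{fixed metric on } q^*T\PP^1)$.

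For part (ii), I will compute the extended mean curvature $K_H$ of this direct-sum metric, using the extension $\nabla_F$ of Proposition \ref{correspondence} with second fundamental form $\beta = p^*\phi \tensor q^*\alpha$. The standard extension formula for the curvature of the Chern connection gives
\[
  R_F \,=\, \begin{pmatrix} R_{p^*E} - \beta \wedge \beta^\ast & \ast \\ \ast & R_{q^*T\PP^1} - \beta^\ast \wedge \beta \end{pmatrix} \, ,
\]
and the off-diagonal blocks of $K_H = \tr_{g_\sigma} R_F$, being $\SU(2)$-invariant sections of the off-diagonal $\Hom$-bundles, vanish by the same argument as in (i). Thus the equation $K_H = \gamma \id_F$ decouples into one equation on each diagonal block.

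The key computation is that $\tr_{g_\sigma}(\beta \wedge \beta^\ast) = \lambda(\sigma)\, p^*(\phi \circ \phi^\ast)$ for an explicit constant $\lambda(\sigma)$ that scales like $1/\sigma$, since $\beta$ has no $M$-components and $g_\sigma$ scales the $\PP^1$-directions by $\sigma$. The $p^*E$-block of the Hermitian--Einstein equation then reads
\[
  K_h - \lambda(\sigma)\, \phi \circ \phi^\ast \,=\, \gamma\, \id_E \, ,
\]
which, for the normalization of $\alpha$ and the value of $\sigma$ making $\lambda(\sigma) = -1/2$ and $\gamma = \tau/2$, is exactly the $\tau$-vortex equation \eqref{vortex}. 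The Hermitian--Einstein constant $\gamma$ is determined by $\deg_\sigma(F)$, $\rank(F)$, and the volume of $X$ with respect to $g_\sigma$; combining the degree formula $\deg_\sigma(F) = n\sigma \cdot \deg(E) + 4\pi \cdot \vol(M)$ from the preceding corollary with $\gamma = \tau/2$ and $\widehat\tau = \tau \cdot \vol(M)/2$, one solves to obtain the stated relation $\sigma = \frac{4\pi \cdot \vol(M)}{n(\rank(E)+1)\widehat\tau - n \cdot \deg E}$. The $q^*T\PP^1$-block equation then reduces to a scalar identity which is consistent with the $p^*E$-block via the trace identity $\tr(K_H) = \gamma \cdot \rank(F)$.

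The main obstacle will be the careful bookkeeping of constants and signs---factors of $\sqrt{-1}$, $(-1)^{n(n+1)/2}$, $\pi$, and the normalizations of $\alpha$ and of the chosen metric on $T\PP^1$---coming from the definitions of $\Omega_\sigma$, $\tr_g$, and division by $p^*\nu$. Once these match, the proof is a direct adaptation of Garc\'{i}a-Prada's dimensional reduction computation \cite[Propositions 3.2, 3.11]{GP94b} to the partial-connection setting on $M \times \PP^1$.
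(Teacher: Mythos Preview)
Your proposal is correct and follows the same route as the paper: both reduce to Garc\'ia-Prada's dimensional reduction computation \cite[Propositions 3.2, 3.11]{GP94b}. The paper does not spell out the argument at all; it simply observes that, via the correspondence between $S^{0,1}$--partially flat bundles on $X$ and holomorphic bundles on $M^\CC\times\PP^1$ constant along the fibers of $TM$, Garc\'ia-Prada's proofs carry over verbatim, the only new point being that degrees are computed by integration over the compact manifolds $M$ and $X$ rather than over $M^\CC$ and $M^\CC\times\PP^1$. Your sketch is thus more detailed than the paper's own treatment, and the ingredients you identify (vanishing of off-diagonal blocks by the $\U(1)$-isotropy argument, the extension formula for curvature, the scaling of $\tr_{g_\sigma}(\beta\wedge\beta^\ast)$ with $\sigma$, and the determination of $\gamma$ from $\deg_\sigma(F)$) are exactly the content of \cite[Propositions 3.2, 3.11]{GP94b}.
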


The proofs of \cite[Theorem 4.9, Propositions 3.2, 3.11]{GP94b} can be applied to our situation by replacing the compact complex manifolds $X$ and $X \times \PP^1$ in \cite{GP94b} by the complex manifolds $M^\CC$ and $M^\CC \times \PP^1$, respectively. Note that although the latter manifolds are not compact, the proofs still go through because the degrees of holomorphic vector bundles over $M^\CC$ (respectively, $M^\CC \times \PP^1$) which are constant along the fibers of $M^\CC \,=\, TM \longrightarrow M$ are computed using integration over the compact manifold $M$ (respectively, $X \,=\, M \times \PP^1$).

We are now in a position to prove Theorem \ref{main}.

\begin{proof}[Proof of Theorem \ref{main}]
Let $(E \,, \phi)$ be a $\widehat \tau$--stable flat pair on $M$.

If $E$ is the trivial line bundle equipped with the trivial connection, then
$\phi$ is an element of $\CC^\ast$. Also, the pair $(E \,,
\phi)$ is $\widehat \tau$--stable if and only if $\widehat\tau \,>\, 0$, or
equivalently, $\tau \,>\, 0$. Using this, it can be easily checked that a
solution to the $\tau$--vortex equation in this case is given by
\[
  h \,:=\, \frac{\tau}{|\phi|^2} \, h_0 \, ,
\]
where $h_0$ is the constant Hermitian metric on $E$ given by the absolute value
(with respect to the trivialization of $E$).

Henceforth, we will assume that $E$ is not the trivial flat line bundle.

By Proposition \ref{stable}, the $\SU(2)$--equivariant $S^{0,1}$--partially flat
vector bundle $F$ over $X$ corresponding to $(E \,, \phi)$ is stable with respect to
$g_\sigma$, where
\[
  \sigma \,=\, \frac{4 \pi \cdot \vol(M)}{n \, (\rank(E) + 1) \, \widehat \tau - n \cdot \deg(E)} \, .
\]
Therefore, by Theorem \ref{duy}, this $\SU(2)$--equivariant $S^{0,1}$--partially
flat vector bundle $F$ admits a Hermitian--Einstein metric $h$ with respect
to $g_\sigma$. By pulling back $h$ using each element of $\SU(2)$ and
then averaging these using the Haar measure on the compact group $\SU(2)$, we
can produce an $\SU(2)$--invariant Hermitian--Einstein metric on $F$. By Proposition \ref{hermitian-einstein}, this metric corresponds to a Hermitian metric on $E$ solving the $\tau$--vortex equation.
\end{proof}

Again using the correspondence between the $S^{0,1}$--partially flat vector
bundles
on $X \,=\, M \times \PP^1$ and the holomorphic vector bundles on $M^\CC \times
\PP^1$, the
methods from \cite{GP94b} also show that if a flat pair on $M$ admits a Hermitian metric satisfying the $\tau$--vortex equation, then it must be $\widehat \tau$--polystable. Therefore, Theorem \ref{main} has the following corollary.
\begin{corollary} \label{corollary}
Let $(M \,, D \,, \nu)$ be a compact connected special affine manifold equipped with an affine Gauduchon metric $g_M$, and let $(E \,, \phi)$ be a flat pair on $M$. Let $\tau$ be a real number, and let
\[
  \widehat \tau \,=\, \frac{\tau \cdot \vol(M)}{2} \, .
\]
Then $E$ admits a smooth Hermitian metric satisfying the $\tau$--vortex
equation if and only if it is $\widehat \tau$--polystable.
\end{corollary}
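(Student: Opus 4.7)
The plan is to reduce both implications to results already in place. For the ``if'' direction, Theorem \ref{main} handles the $\widehat\tau$-stable case. For a strictly $\widehat\tau$-polystable pair, write $E = E' \oplus E''$ with $\phi$ a section of $E'$, with $(E',\phi)$ being $\widehat\tau$-stable and $E''$ polystable with $\mu_{g_M}(E'') = \widehat\tau/n$. I would apply Theorem \ref{main} to $(E',\phi)$ to obtain a $\tau$-vortex metric $h'$, and Theorem \ref{loftin-thm} to $E''$ to obtain a Hermitian--Einstein metric $h''$ with some Einstein constant $\gamma$. Taking the trace of $K_{h''} = \gamma\,\id_{E''}$, integrating against $\omega_M^n/\nu$, and using $(\tr K_{h''})\,\omega_M^n = n\,c_1(E'',h'')\wedge\omega_M^{n-1}$, one identifies $\gamma = n\,\mu_{g_M}(E'')/\vol(M) = \tau/2$. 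Hence $h := h'\oplus h''$ satisfies the full $\tau$-vortex equation on $E$, since $\phi \circ \phi^*$ vanishes on the $E''$-summand.

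For the ``only if'' direction, assume $h$ solves the $\tau$-vortex equation on $(E,\phi)$. The trivial-line-bundle case is handled exactly as in the proof of Theorem \ref{main}, so I may assume $E$ is not a trivial flat line bundle. Choose $\sigma > 0$ via the formula of Proposition \ref{hermitian-einstein}(ii) and form the $\SU(2)$-equivariant $S^{0,1}$-partially flat bundle $F = p^*E \oplus q^*T\PP^1$ associated to $(E,\phi)$ by Proposition \ref{correspondence}. Proposition \ref{hermitian-einstein}(ii) promotes $h$ to an $\SU(2)$-invariant Hermitian--Einstein metric on $F$ with respect to $g_\sigma$; Theorem \ref{duy} then implies that $F$ is $g_\sigma$-polystable.

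It remains to translate polystability of $F$ into $\widehat\tau$-polystability of $(E,\phi)$, following \cite{GP94b}. Decompose $F = \bigoplus_i F_i$ into $S^{0,1}$-partially flat stable summands of slope $\mu_{g_\sigma}(F)$; averaging the projections under the Haar measure on $\SU(2)$ makes the decomposition $\SU(2)$-equivariant. Each $\SU(2)$-equivariant stable summand is, by the classification of $\SU(2)$-equivariant bundles along the $\PP^1$-fibers, either of the form $p^*E''_i$ for some flat bundle $E''_i$ on $M$, or the equivariant extension associated by Proposition \ref{correspondence} to a flat pair $(E'_i,\phi_i)$ with $\phi_i \neq 0$. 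By Proposition \ref{stable} and Lemma \ref{lemma}, the summands of the first kind assemble into a polystable flat bundle $E''$ on $M$ of slope $\widehat\tau/n$, and the unique summand of the second kind (which must contain the global section $\phi$) is a $\widehat\tau$-stable pair $(E',\phi)$; together these exhibit $(E,\phi)$ as $\widehat\tau$-polystable.

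The main obstacle is this final translation: verifying that $\SU(2)$-equivariance forces each stable summand of $F$ into exactly one of the two types above, and that the pair-type summands combine correctly to realize $(E,\phi)$ as $\widehat\tau$-polystable. This is the partial-connection analogue of the dimensional-reduction analysis in \cite{GP94b}, and transfers essentially unchanged because the $S^{0,1}$-partial-connection setup on $X$ is, by construction, equivalent to the fiberwise-constant holomorphic category on $M^\CC \times \PP^1$, where the arguments of \cite{GP94b} apply verbatim and all degree computations reduce, via Lemma \ref{lemma}, to integrals over the compact manifold $M$.
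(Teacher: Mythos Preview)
Your proposal is correct and follows the same dimensional-reduction strategy the paper uses; indeed, the paper's own argument for the corollary is a single sentence invoking the methods of \cite{GP94b} for the ``only if'' direction and Theorem~\ref{main} for the ``if'' direction, so your write-up is considerably more explicit. In particular, you spell out the strictly $\widehat\tau$-polystable case of the ``if'' direction (splitting $E=E'\oplus E''$, applying Theorem~\ref{main} to $(E',\phi)$ and Theorem~\ref{loftin-thm} to $E''$, and matching $\gamma=\tau/2$), which the paper leaves entirely implicit, and you unpack the ``only if'' direction via Proposition~\ref{hermitian-einstein}(ii), Theorem~\ref{duy}, and the $\SU(2)$-equivariant decomposition of $F$, which the paper simply attributes to \cite{GP94b}.
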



\begin{thebibliography}{GP94b}

\bibitem[Br90]{Br90} {\scshape S.~B.~Bradlow}: Vortices in holomorphic line
bundles over closed K\"ahler manifolds, {\em Comm.\ Math.\ Phys.\/}\ {\bfseries
135} (1990), 1--17.

\bibitem[Br91]{Br91} {\scshape S.~B.~Bradlow}: Special metrics and stability for
holomorphic bundles with global sections, {\em Jour.\ Differ.\ Geom.\/}\
{\bfseries 33} (1991), 169--213.

\bibitem[CGMS]{CGMS} {\scshape K.~Cieliebak, A. R. Gaio, I.~Mundet {\i} Riera
and D. A. Salamon}: The symplectic vortex equations and invariants of Hamiltonian
group actions, {\em Jour.\ Symplectic\ Geom.\/}\ \textbf{1} (2002), 543--645.

\bibitem[GP93]{GP93} {\scshape O.~Garc\'ia-Prada}: Invariant connections and
vortices, {\em Comm.\ Math.\ Phys.\/}\ {\bfseries 156} (1993), 527--546.

\bibitem[GP94a]{GP94a} {\scshape O.~Garc\'ia-Prada}: A direct existence proof
for the vortex equation over a compact Riemann surface, {\em Bull.\ Lond.\
Math.\ Soc.\/}\ {\bfseries 26} (1994), 88--96.

\bibitem[GP94b]{GP94b} {\scshape O.~Garc\'ia-Prada}: Dimensional reduction of
stable bundles, vortices and stable pairs, {\em Int.\ Jour.\ Math.\/}\
{\bfseries 5} (1994), 1--52.

\bibitem[LY87]{LY87} {\scshape J.~Li and S.-T.~Yau}: Hermitian--Yang--Mills
connection on non--K\"ahler manifolds, {\em Mathematical aspects of string
theory}, Proc.\ Conf., San Diego/Calif.\ 1986, 560--573, Adv.\ Ser.\ Math.\
Phys.\ {\bfseries 1} (1987).

\bibitem[Lo09]{Lo09} {\scshape J.~Loftin}: Affine Hermitian--Einstein metrics,
{\em Asian Jour.\ Math.\/}\ {\bfseries 13} (2009), 101--130.

\bibitem[Ra79]{Ra79} {\scshape J.~H.~Rawnsley}: Flat partial connections and
holomorphic structures in $C^\infty$ vector bundles, {\em Proc.\ Amer.\ Math.\
Soc.\/}\ {\bfseries 73} (1979), 391--397.

\bibitem[UY86]{UY86} {\scshape K.~Uhlenbeck and S.-T.~Yau}: On the existence of
Hermitian--Yang--Mills connections in stable vector bundles, {\em Commun.\ Pure
Appl.\ Math.\/}\ \textbf{39} (1986), 257--293.

\bibitem[UY89]{UY89} {\scshape K.~Uhlenbeck and S.-T.~Yau}: A note on our
previous paper: On the existence of Hermitian Yang--Mills connections in stable
vector bundles, {\em Commun.\ Pure Appl.\ Math.\/}\ \textbf{42} (1989),
703--707.

\end{thebibliography}
\end{document}